\def\({\left(}
\def\]{\right]}
\def\[{\left[}
\def\){\right)}
\newtheorem{thm}{Theorem}[section]
\newtheorem{lem}[thm]{Lemma}
\newtheorem{rem}[thm]{Remark}
\newtheorem{exm}[thm]{Example}
\def\P{{\mathbb P}}
\def\E{{\mathbb E}}
\def\Cov{{\mathbb Cov}}
\def\SS{\mathcal S}
\newcommand{\disp}{\displaystyle}
\newcommand{\bea}{$$\begin{array}{ll}}
\newcommand{\eea}{\end{array}$$}
\newcommand{\bed}{\begin{displaymath}}
\newcommand{\eed}{\end{displaymath}}
\newcommand{\ad}{&\!\!\!\disp}
\newcommand{\aad}{&\disp}
\newcommand{\barray}{\begin{array}{ll}}
\newcommand{\earray}{\end{array}}
\newcommand{\beq}[1]{\begin{equation} \label{#1}}
\newcommand{\eeq}{\end{equation}}
\newcommand{\bedd}{\bed\begin{array}{l}}
\newcommand{\eedd}{\end{array}\eed}
\newcommand{\al}{\alpha}
\newcommand{\sg}{\sigma}
\newcommand{\e}{\varepsilon}
\newcommand{\nd}{\noindent}
\newcommand{\M}{{\cal{M}}}
\newcommand{\one}{{1}}
\newcommand{\wdt}{\widetilde}
\newcommand{\cd}{(\cdot)}
\newcommand{\rr}{\mathbb R}
\newcommand{\R}{\mathbb R}
\def\one{{\hbox{1{\kern -0.35em}1}}}
\def\ka{\kappa}
\def\L{{\cal L}}
\def\G{{\cal G}}
\def\({\left(}
\def\]{\right]}
\def\[{\left[}
\def\){\right)}
\def\one{{\hbox{1{\kern -0.35em}1}}}
\makeatletter \@addtoreset{equation}{section}
\def\para#1{\vskip 0.4\baselineskip\noindent{\bf #1}}
\def\qed{$\qquad \Box$}
\def\argmax{\hbox{argmax}}
\begin{document}

\title{Harvesting of a Stochastic Population under a Mixed Regular-Singular Control Formulation}

\author{Ky Q. Tran\thanks{Department
of Applied Mathematics and Statistics,
State University of New York - Korea Campus, Yeonsu-Gu, Incheon, Korea 21985, ky.tran@stonybrook.edu. The research of this author was supported by the National Research Foundation of Korea grant funded by the Korea Government (MIST) NRF-2021R1F1A1062361.}
\and
Bich T. N. Le \thanks{Department of Mathematics, Hue University of Education, Hue University, Hue city, Vietnam, ltnbich@hueuni.edu.vn. }
\and
George Yin\thanks{Department of Mathematics, University of Connecticut, Storrs, CT
06269, USA, gyin@uconn.edu.  The research of this author was supported in part by the Air Force Office of Scientific Research.}
}

\date{}

\maketitle

\begin{abstract}
This work focuses on optimal harvesting-renewing for a stochastic population. A mixed regular-singular control formulation with a state constraint
and regime-switching is introduced.
The decision makers either harvest or renew with finite or infinite harvesting/renewing rates.
The 
payoff
functions depend
on the harvesting/renewing rates. Several properties of the value functions are established.
The limiting value function as the white noise intensity approaches infinity is identified.
The
Markov chain approximation method is used to find numerical approximation of the value function and
optimal strategies.

\vskip 0.3 true in \nd{\bf Key words.}
Harvesting problem, controlled diffusion, singular control, state constraint, Markovian switching


\end{abstract}

\maketitle

\setlength{\baselineskip}{0.22in}

\newpage

\section{Introduction}

This work focuses on harvesting and renewing strategies for stochastic ecosystems  that are represented by controlled
stochastic differential equations.  Mathematically, the problem we consider
belongs to a
class of singular stochastic control problems, namely, harvesting-type problems.
Such problems have been studied extensively in various settings for different domain of applications; see \cite{A1998,A2000,Lande95,Hau1995,Hau19952,Jin12,K1984,K1985,L1997,RS1996}. To mention just a few of the recent developments, we refer to \cite{Zhu11,Ky17}
for single species and interacting population systems with regime switching.
 The paper \cite{H2019b} focuses on sustainable harvesting policies under long-run average criteria, which are further studied in \cite{L2020}; see also \cite{N2020} for a related work on a predator-prey system and \cite{Kunwai21}
 for an ergodic two-sided singular control formulation. In \cite{H2019,H2020}, the authors study ecosystems in which
 both renewing and harvesting actions are included. Optimal exploitation problems of renewable natural resources, which are harvesting-type problems, are studied in \cite{K2020,Ky21}. Related works on a general one-dimensional diffusion that is reflected at zero can be found in \cite{F2019} and references therein. Intensive treatment of stochastic population systems and hybrid diffusions can be found in \cite{Mao2006,YinZ10}, while applications in various areas of singular stochastic control
can be found in \cite{Fleming93,Pham09} and many references therein.

In this work, we propose a generalized harvesting model for  a stochastic population.
 The controller can perform either a regular control or a singular control to harvest and renew the species.
Moreover, we also consider the control objective
associated with renewing and harvesting. It should be noted that in optimal harvesting formulations in \cite{A1998,A2000,Zhu11,H2019,H2020}, the control cost is simply combined with the price functions.
As a result, it is frequently seen that when the manager decides to harvest (resp., renew), she should do that with the maximal possible harvesting rates (resp. renewing rates); see \cite{A1998,H2019,H2020}. In \cite{K2020,Ky21}, interesting phenomena appear when the control objective
is taken into account. For instance, the maximal possible rates are no longer optimal for harvesting and renewing in certain cases.
With the generalized formulation proposed in this work, the distinctions are even more pronounced.  In addition, as
a new twist,
state constraint is considered
in this work.
In particular, the time horizon of the control problem is $[0, \tau]$ where $\tau$ is the first time the population process is
below
a predetermined level. Another important
issue of interest is the impact of a white noise with large intensity. For Kolmogorov-type ecosystems, it is known that very large white noise make the species extinct, and have a major impact on harvesting actions; see \cite{A1998,Mao2006,Ky17}.
A novelty of the paper is the identification of
the limiting value function as the white noise intensity approaches infinity for a general stochastic population.

 In contrast to the existing results, our new contributions in this
 paper are as follows.
 (i) We formulate a harvesting problem with renewing and the consideration of control objective,
 a state constraint, and regime-switching. Both bounded and unbounded harvesting-renewing rates are allowed.
 (ii) We establish the finiteness and the continuity of the value function. We show that in common cases, it is optimal to keep the population size in a compact set. (iii) We study the impact of a white noise with large intensity on harvesting.
  (iv)  Based on the Markov chain approximation method, we construct a controlled
 Markov chain to approximate the given controlled population. It enables us to approximate the value function and near-optimal strategies.

 The rest of our work is organized as follows. Section 2 begins with
 the problem formulation. Section 3 focuses on properties of the value function and the impact of a white noise with large intensity.  In Section 4, we construct a controlled Markov chain to approximate the given controlled population system.
  Finally, the paper is concluded
 with several numerical examples for illustration together with
 additional remarks in the last section.
 
 \section{Formulation}\label{sec:for}
We work with a complete filtered probability space $(\Omega, \mathcal{F}, \P,
\{\mathcal{F}_t\})$ with the filtration $\{\mathcal{F}_t\}$ satisfying the usual condition
(i.e., it is right-continuous and ${\mathcal F}_0$ contains all the null sets). Let $\rr_+=[0, \infty)$ and $\mathbb{Z}_+=\{0, 1, 2, \dots\}$.
For a real number $x$, we denote $x^+=\max\{x, 0\}$ and $x^-=\max\{-x, 0\}$. Thus, $x=x^+-x^-$ and $|x|=x^++x^-$.
 Suppose that the population size $\xi(t)$ of a species at time $t$ is given by
\beq{e.1} d \xi(t)=b\big(\xi(t), \Lambda(t)\big)
dt+\sigma\big(\xi(t),  \Lambda(t)\big) d w(t).\eeq
 In the model, $w\cd$ is a
one-dimensional standard Brownian motion and $\Lambda\cd$ is a continuous time Markov chain. Moreover, $\Lambda\cd$ and $w\cd$ are independent and $\{\mathcal{F}_t\}$-adapted.
Suppose $\Lambda\cd$ takes values in $\mathcal{M}=\{1, 2, \dots, m_0\}$  with
generator $\Gamma=(\Gamma_{ \al \ell})_{m_0\times m_0}$ and $m_0$ being a positive integer.
The coefficients  $b(\cdot, \cdot)$ and $\sg(\cdot, \cdot)$ are real-valued functions defined on $\R_+\times \M$. 
The transition probabilities of
$\Lambda\cd$  is described by
\beq{e.main2}
\barray  \P\{\Lambda(t+\Delta t)=\ell |\Lambda(t)=\al \}
= \begin{cases} \Gamma_{\al \ell}\Delta t + o(\Delta t) &\ \ \text{if}
\ \ \al\not =\ell,  \\
1 + \Gamma_{\al \al}\Delta t + o(\Delta t) &\ \ \text{if} \ \ \al=\ell.
\end{cases}
\earray
\eeq
Note that $\Gamma_{\al \ell}\ge 0$ if $\al\ne \ell$ and $\sum_{\ell\in \M} \Gamma_{\al \ell}=0$ for any $\al\in \M$.
To illustrate, consider the stochastic logistic population growth model given by
\beq{e.main3}dX(t) = X(t) \Big(\ka_1(\Lambda(t))  - \ka_2(\Lambda(t)) X(t)\Big)dt + \ka_3(\Lambda(t)) X(t)dw(t),\eeq
where $\ka_1\cd, \ka_2\cd, \ka_3\cd$ are real-valued functions defined on the state space $\M$ of the Markov chain $\Lambda\cd$.
The switching component $\Lambda\cd$ is introduced to capture   the major environmental shifts (daily or seasonal changes or catastrophes) leading to the changes in the carrying capacities and interactions in different environments.
To visualize the dynamics of
\eqref{e.main3},
without loss of generality, assume that $\Lambda(0) = \al$. Then the
Markov chain rests in state $\al$ for an exponentially distributed random duration,
in this time interval,
the model  given by \eqref{e.main3} obeys
$$
dX(t) = X(t) \Big(\ka_1(\al)  - \ka_2(\al) X(t)\Big)dt + \ka_3(\al) X(t)dw(t),$$
until the Markov chain $\Lambda\cd$ jumps to another state $\ell$. Then the model \eqref{e.main3} obeys
$$
dX(t) = X(t) \Big(\ka_1(\ell)  - \ka_2(\ell) X(t)\Big)dt + \ka_3(\ell) X(t)dw(t)$$
for an exponentially distributed random
time until the Markov chain $\Lambda\cd$ jumps to a new state again
and so on.

 To proceed, we introduce the generator of the process $\big(\xi(t), \Lambda(t)\big)$.
For a function $\Phi(\cdot, \cdot): \rr\times \M\mapsto \rr$ satisfying
$\Phi(\cdot, \al)\in C^{2}(\rr)$ for each $\al\in \M$, we define
\bea
\L \Phi(x, \al)\ad= b( x, \al) \Phi'(x, \al)+\dfrac{1}{2}\sigma^2(x, \al)\Phi''(x, \al) + \sum\limits_{\ell\in \M}\Gamma_{\al \ell} \Phi(x, \ell),\eea
where $\Phi'(\cdot, \al)$ and $\Phi''(\cdot, \al)$ denote the first and second derivatives
(w.r.t. $x$) of $\Phi(\cdot, \al)$, respectively.

Next, we suppose that the population can be instantaneously harvested, 
instantaneously renewed, 
harvested with bounded rates, or renewed with bounded rates. In order to harvest or renew instantaneously (that is, harvest or renew with infinite rates), the controller needs to exercise an impulsive control. Meanwhile, to
harvest or renew with bounded rates, the controller performs a regular control.
Specifically,
we
assume
the dynamics of the species is given by
\beq{e.2}
\barray
X(t)\ad =x+\int_{0}^t b\big(X(s), \Lambda(s)\big) ds +  \int_{0}^t  \sigma\big( X(s), \Lambda(s)\big) dw(s) \\
\ad \qquad \quad - \int_{0}^t f\big(X(s), C(s)\big)ds - Y(t) + Z(t),
\earray
\eeq
where $x\in \rr_+$,
$X(t)$ is the population size  at time $t\ge 0$, $f:  \rr_+\times \mathcal{U}\mapsto \R$ is the harvesting-renewing rate corresponding to the control $C\cd$ taking values in a nonempty compact set $\mathcal{U}$ in $\mathbb{R}$, while $Y\cd$ and $Z\cd$ are impulsive controls.
In particular,
$Y(t)$ denotes the amount of the species  that has been instantaneously harvested up to time $t$, while $Z(t)$ denotes the amount of the species  that has been instantaneously renewed up to time $t$.

\para{Notation.} For each time $t$,
$X(t-)$ represents the state
before harvesting or renewing starts at time $t$, while $X(t)$ is the state immediately after. We assume the initial population size
to be
$X(0-)=x$ and initial regime
to be
$\Lambda(0)=\al$, respectively.
Hence $X(0)$ may not be equal to $X(0-)$ due to an impulsive harvesting $Y(0)$ or an impulsive renewing $Z(0)$. Throughout the paper we use the convention that $Y(0-)= Z(0-)=0$.
The jump sizes of $Y\cd$ and $Z\cd$ at time $t$ are denoted by $\Delta Y(t)=Y(t) -Y(t-)$ and $\Delta Z(t)=Z(t) -Z(t-)$, respectively. Thus,
$$Y(t) = \sum\limits_{0\le s\le t} \Delta Y(s), \ \  Z(t) = \sum\limits_{0\le s\le t} \Delta Z(s).$$
Suppose $\lambda\in \R_+$ and denote
$$\SS = \{x\in \R_+:  x\ge \lambda \}.$$
In this work,
we consider the harvesting problem on the time horizon $[0, \tau]$, where $$\tau=\inf\{t\ge 0: X(t)\notin \SS\}.$$
The price per unit of the species is a positive constant $a_1$.
The harvesting and renewing control is costly. Consider a function $g:
\mathbb{R}_+\times \M\times \mathcal{U}\mapsto \mathbb{R}_+$. The accumulated cost of  the regular control  $C\cd$ is
$$\int_0^{\tau} e^{-\delta s }g\big(X(s),\Lambda(s), C(s)\big)ds,$$
 where $\delta> 0$ is the discount factor.
 Note that we
 separate cost and income here. Later we will take into account 
 of both and set up the payoff.
 Regarding the regular control $C\cd$, the accumulated income of selling the harvested amount is $\int_{0}^{{\tau}} e^{-\delta s} a_1  f^+\big(X(s),C(s)\big)ds$ and the accumulated expense of the renewed amount is $\int_{0}^{{\tau}} e^{-\delta s} a_1 f^-\big(X(s),C(s)\big) ds$.
For notational simplicity, we define the price-cost function $p: \mathbb{R}_+\times \M\times \mathcal{U}\to \R$ given by
\beq{p.fun}
p(x, \al, c) = a_1 f(x,c) - g(x, \al, c)\ \  \text{for} \ \  (x, \al, c)\in \R_+\times \M\times \mathcal U.\eeq
Then the payoff functional associated with the regular control $C\cd$ is
$$ \E_{x, \al} \Big[ \int_{0}^{{\tau}} e^{-\delta s}  p\big(X(s),  \Lambda(s), C(s)\big)ds\Big],
$$
where
  $\E_{x, \al}$ denotes the expectation with
$X(0-)=x$ and $\Lambda(0)=\al$.
Let
$a_2$ and  $a_3$ be two positive constants. Suppose that the cost of instantaneous harvesting a unit of the species is $a_2$, while the cost of instantaneous renewing a unit of the species is $a_3$.
Then the accumulated cost of
exercising the impulsive control $Y\cd$ is $\int_{0}^{{\tau}} e^{-\delta s} a_2  dY(s)$ while the accumulated income of selling the harvested amount by $Y\cd$ is $\int_{0}^{{\tau}} e^{-\delta s} a_1  dY(s)$.
Meanwhile,
 the accumulated cost of exercising the impulsive control $Z\cd$ is $\int_{0}^{{\tau}} e^{-\delta s} a_3  dZ(s)$ and
  the accumulated expense of the renewed amount by $Z\cd$ is $\int_{0}^{{\tau}} e^{-\delta s} a_1  dZ(s)$.
 Hence
the payoff functional associated with the impulsive controls $Y\cd$ and $Z\cd$ is
$$\E_{x, \al} \Big[ \int_{0}^{\tau} e^{-\delta s} (a_1-a_2)  dY(s) - \int_{0}^{\tau} e^{-\delta s} (a_1+a_3)  dZ(s)\Big].$$
We define
$$q = a_1 - a_2, \ \  r = a_1+a_3.$$
Then, for a harvesting-renewing strategy $\Psi\equiv(C, Y, Z)$, we define the \textit{payoff functional} as
\beq{e.6}\barray
J(x, \al, \Psi)=  \E_{x, \al} \ad \Big[ \int_{0}^{{\tau}} e^{-\delta s}  p\big(X(s),  \Lambda(s), C(s)\big)ds\\ \aad \  +\int_{0}^{\tau} e^{-\delta s} q  dY(s) - \int_{0}^{\tau} e^{-\delta s} r  dZ(s)\Big].\earray
\eeq

\para{Control strategy.}
Let $\mathcal{A}_{x, \al}$ denote the collection of all admissible controls with initial condition $X(0-)=x$, $\Lambda(0)=\al$. A harvesting-renewing strategy $\Psi=(C, Y, Z)$ will be in $\mathcal{A}_{x, \al}$ if it satisfies
 the following
conditions.

\begin{itemize}
\item[{\rm (a)}]  The processes $C\cd$, $Y\cd$, and $Z\cd$ are adapted to
$\sg\{w(s), \Lambda(s): 0\le s\le t\}$; $C\cd$ takes values in $\mathcal{U}$, $Y\cd$ and  $Z\cd$ are impulsive controls with non-decreasing, nonnegative, piecewise constant, and right-continuous sample paths; $\Delta Y (s)\Delta Z(s)=0$ for any $s\in \R_+$; $\Delta Y(s)=\Delta Z(s)=0$ for any $s\ge \tau$;
\item[{\rm (b)}]
System \eqref{e.2} has a unique solution $X\cd$ with $X(t)\in \SS$ for any $t\in [0, \tau]$;
\item[{\rm (c)}] {$0\le  J(x, \al, \Psi)<\infty$, where $J\cd$ is the functional defined in \eqref{e.6}. }

\end{itemize}
The problem we are
interested in is to maximize the
payoff functional and find an optimal strategy $\Psi^*=(C^*, Y^*,  Z^*)\in \mathcal{A}_{ x, \al}$ such that
\beq{e.5}
J(x, \al, \Psi^*)=V(x, \al):= \sup\limits_{\Psi\in \mathcal{A}_{x, \al}}J( x, \al, \Psi).
\eeq
The function $V(\cdot)$ is called the \textit{value function}.

The standing assumptions are given below.
\begin{itemize}\item[{\rm (A)}]
\begin{itemize}
 \item[{\rm (a)}]  For any $n\in \mathbb{Z}_+$, there exists a positive constant $K_{n}$ such that
 for any $x, y\in \R_+$ with $|x|\le n, |y|\le n$ and any $\al\in \M$,
 $$|b(x, \al) - b(y, \al)| + | \sg(x, \al) - \sg(y, \al)| \le K_{n}|x-y|.$$
Moreover, for each initial condition $(x, \al)\in \R_+\times \M$,
the population system \eqref{e.1} has a unique global solution.

\item[{\rm (b)}]
  The control set $\mathcal{U}$ is a nonempty compact set of real numbers, $0\in \mathcal{U}$, $a_1,a_2, a_3$ are positive constants and $a_1>a_2$.
The function $g(\cdot, \cdot, \cdot)$ is continuous and bounded on $\R_+\times \M\times \mathcal{U}$. The function $f(\cdot, \cdot)$ is continuous and bounded on $\R_+\times \mathcal{U}$ and $f(\cdot, 0)=0$.
\end{itemize}
\end{itemize}

\begin{rem}{\rm
For simplicity, we require the systems of equations having a global solution. In fact, the existence and uniqueness of global solutions for a large class of Kolmogorov systems are guaranteed under suitable conditions; see the recent work \cite{NNY21} and the references therein, and
also \cite{Mao2006} for various sufficient conditions so that the population system \eqref{e.1} has a unique global solution.
Of course, here we are dealing with a controlled system so some modifications are needed. Nevertheless, in order to concentrate on our main task, we choose to simply assume this condition.

The mixed regular-singular control formulation together with the consideration of a state constraint, cost functions allow us to take into account various aspects of harvesting-type problems that have not been considered to date.
For instance, depending on  the costs, one can choose to apply either an impulsive harvesting/renewing or harvest and renew through the regular control. }
\end{rem}

\begin{rem}
{\rm The consideration of $\lambda$ is motivated by sustainability.
To the
best of our knowledge, the available literature focuses on the case $\lambda=0$ in which the optimal or near-optimal harvesting strategies might drive the population process to a very low level or extinction; see \cite{A1998,H2019,Zhu11}.
Because of the state constraint, one needs to
 treat carefully control actions when the population size is close to $\lambda$.

The function $f:  \rr_+\times \mathcal{U}\mapsto \R$ is the harvesting-renewing rate corresponding to the control $C\cd$.
In \cite{H2020}, the authors studied
the case $f(x, c)=c$ and $g(x, \al, c)=0$ for any $(x, \al, c) \in \R_+\times \M\times \mathcal{U}$.
Motivated by the observations in  \cite[Section 3]{A1998} and \cite{K2020}, one can also take $f(x, c) = \min\{cx, \ka\}$ and $g(x, \al, c)=0$ for any $(x, \al, c) \in \R_+\times \M\times \mathcal{U}$, where $\ka$ is a positive constant. By considering the general form $f:  \rr_+\times \mathcal{U}\mapsto \R$,  our formulation is much more general than the aforementioned cases.  
}
\end{rem}

\section{Properties of the Value Function}\label{sec:pro}

This section is devoted to several properties of the value function. We begin with a lemma that allows us to establish the finiteness and construct upper bounds of the value function.

\begin{lem}\label{lem:1} Let {\rm (A)} be satisfied. Suppose that there exists a  function $\Phi:  \SS\times \M \mapsto \mathbb{R}_+$ such that $\Phi(\cdot, \al)\in C^{2}({\SS})$ for each $\al\in \M$ and
 $\Phi(\cdot, \cdot)$ solves the following coupled system of quasi-variational inequalities
	\beq{e.3.1}
	\max \bigg\{ \mathcal{G}\Phi(x, \al), q -  \Phi' (x, \al),  \Phi'(x, \al)-r
\bigg\}\le 0 \ \  \text{for} \ \  (x, \al) \in \SS\times \M, 
\eeq
where
$$\mathcal{G}\Phi(x, \al) = (\mathcal{L} -\delta ) \Phi (x, \al)+\max\limits_{c \in \mathcal{U}}\Big[p(x, \al, c) -  \Phi' (x, \al)  f(x, c)\Big].$$
Recall that $\delta$ is the discount factor given in the payoff functional \eqref{e.6}.
	 Then we have
$$V(x, \al)\le \Phi(x, \al) \ \  \text{for} \ \  (x, \al) \in \SS\times \M.$$
\end{lem}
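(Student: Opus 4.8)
The plan is to run a verification (comparison) argument: for an \emph{arbitrary} admissible strategy I would show that $\Phi$ dominates the payoff, and then take the supremum over strategies. Fix $\Psi=(C,Y,Z)\in\mathcal{A}_{x,\al}$ with associated state process $X\cd$; by admissibility condition (b) this process stays in $\SS$ on $[0,\tau]$, so $\Phi(X(s),\Lambda(s))$ is well defined and nonnegative throughout. First I would apply the generalized It\^o--Dynkin formula for switching jump-diffusions to $s\mapsto e^{-\delta s}\Phi(X(s),\Lambda(s))$ on $[0,\tau]$. Between the impulse times, $X\cd$ evolves with drift $b(X,\Lambda)-f(X,C)$ and diffusion coefficient $\sigma(X,\Lambda)$, so the absolutely continuous part of the differential produces the integrand $e^{-\delta s}\big[(\mathcal{L}-\delta)\Phi - f(X(s),C(s))\,\Phi'\big]$ (here $\mathcal{L}$ carries the switching term generated by $\Lambda\cd$), plus a local-martingale part coming from $dw$ and from the compensated jumps of $\Lambda\cd$; the jumps of $Y\cd$ and $Z\cd$ enter as a pathwise sum of increments of $\Phi$, during which the regime is unchanged.

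The three inequalities packaged in \eqref{e.3.1} then sign the three contributions. For the continuous part, since $C(s)\in\mathcal{U}$ is merely one competitor in the maximum defining $\mathcal{G}$,
$$(\mathcal{L}-\delta)\Phi(X(s),\Lambda(s)) - f(X(s),C(s))\,\Phi'(X(s),\Lambda(s)) + p(X(s),\Lambda(s),C(s)) \le \mathcal{G}\Phi(X(s),\Lambda(s)) \le 0,$$
so this drift is bounded above by $-e^{-\delta s}p(X(s),\Lambda(s),C(s))$. For a downward jump of $X\cd$ of size $\Delta Y(s)$ I would write the increment $\Phi(X(s),\cdot)-\Phi(X(s-),\cdot)=-\int_{X(s-)-\Delta Y(s)}^{X(s-)}\Phi'(u,\cdot)\,du$ and use $\Phi'\ge q$ (the second inequality) to bound it by $-q\,\Delta Y(s)$; symmetrically an upward jump of size $\Delta Z(s)$ is bounded by $r\,\Delta Z(s)$ via $\Phi'\le r$ (the third inequality). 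Since $\Delta Y(s)\Delta Z(s)=0$, at most one type of jump occurs at each instant, and summing over $[0,\tau]$ controls the jump part by $-\int_0^\tau e^{-\delta s}q\,dY(s)+\int_0^\tau e^{-\delta s}r\,dZ(s)$.

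Assembling the three estimates and rearranging the Dynkin identity gives, pathwise,
$$\Phi(x,\al) \ge e^{-\delta\tau}\Phi(X(\tau),\Lambda(\tau)) + \int_0^\tau e^{-\delta s}p\,ds + \int_0^\tau e^{-\delta s}q\,dY(s) - \int_0^\tau e^{-\delta s}r\,dZ(s) - M,$$
where $M$ collects the two local-martingale terms. Taking expectations, discarding the nonnegative boundary term $e^{-\delta\tau}\Phi(X(\tau),\Lambda(\tau))\ge0$, and recognizing the remaining expectation as $J(x,\al,\Psi)$ would yield $\Phi(x,\al)\ge J(x,\al,\Psi)$; the supremum over $\Psi\in\mathcal{A}_{x,\al}$ then gives $\Phi\ge V$.

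I expect the only real obstacle to be making the passage to expectations rigorous, since $M$ is merely a local martingale. The remedy is a standard localization: set $\tau_n=\tau\wedge\inf\{t:|X(t)|\ge n\}$, run the inequality on $[0,\tau_n]$---where the Lipschitz and boundedness hypotheses of (A) apply on the compact region $|x|\le n$, so that the stopped stochastic integrals are genuine martingales with zero mean---and then let $n\to\infty$. The limit step is where care is needed: I would invoke monotone convergence for the $dY$, $dZ$ integrals, boundedness of $p$ (from the boundedness of $f$ and $g$ in (A)(b)) for the running term, and $\Phi\ge0$ for the boundary term, the finiteness condition (c) on $J$ guaranteeing that the limiting quantities are integrable. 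A minor point is that $\Phi$ is only $C^2$ on the half-line $\SS$; this causes no trouble because (b) confines $X\cd$ to $\SS$ up to $\tau$, so one-sided smoothness at $\lambda$ is all that the It\^o formula requires and $\Phi$ need not be extended beyond $\SS$.
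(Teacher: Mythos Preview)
Your proposal is correct and follows essentially the same verification argument as the paper: localize, apply Dynkin's formula, use the three quasi-variational inequalities to sign the drift and the two types of jumps (the paper uses the mean value theorem where you integrate $\Phi'$, which is equivalent), drop the nonnegative terminal term, and pass to the limit before taking the supremum over strategies. One small technical point: the paper localizes with $T_N=N\wedge\tau_N\wedge\tau$, i.e.\ it also truncates the time variable, whereas your $\tau_n=\tau\wedge\inf\{t:|X(t)|\ge n\}$ may be infinite; this is harmless here because the discount factor together with $\Phi'\in[q,r]$ and the local boundedness of $\sigma$ makes the stopped stochastic integral square-integrable on $[0,\infty)$, but it is cleaner to include the time cutoff as the paper does.
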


\begin{proof}
For a fixed  $(x, \al)\in \SS \times \M$ and $\Psi=(C, Y, Z)\in \mathcal{A}_{x, \al}$, let $X$ denote the corresponding harvested process.  Choose $N$ sufficiently large so that $|x|<N$. Define
$$
\tau_N=\inf\{t\ge 0:   X(t)\ge N\}, \ \  T_N = N\wedge \tau_N \wedge \tau.
$$
Then \beq{e.3.2}
\tau_N\to \infty\ \  \text{and}\ \  T_N \to \tau \ \   \text{almost surely as } \ \  N \to\infty,
\eeq
where $\tau=\inf\{t\ge 0: X(t)\notin \SS\}.$
 Then Dynkin's formula leads to
\beq{e.3.1.x}
\begin{array}{ll}
\E \ad \big[e^{-\delta T_N}\Phi\big(X(T_N), \Lambda(T_N)\big)\big]=\Phi(x, \al)+ \E\int_{0}^{T_N} e^{-\delta s}(\L-\delta)\Phi\(X(s), \Lambda(s)\)ds\\
\ad \qquad \qquad -\E \int_{0}^{T_N} e^{-\delta s}\Phi'\(X(s), \Lambda(s)\) f\big(X(s), C(s)\big)ds \\
\ad \qquad \qquad+ \E \sum\limits_{0\le s\le T_N}e^{-\delta s}\Big[\Phi\(X(s), \Lambda(s-)\)-\Phi\( X(s-),\Lambda(s-)\)\Big].
\end{array}
\eeq
It follows from \eqref{e.3.1} that
\beq{e.3.1.y}
\barray
\ad \E\int_{0}^{T_N} e^{-\delta s}(\L-\delta)\Phi\(X(s), \Lambda(s)\)ds-\E \int_{0}^{T_N} e^{-\delta s}\Phi'\(X(s), \Lambda(s)\) f\big(X(s), C(s)\big) ds\\
\ad \quad \le -\E\int_{0}^{T_N} e^{-\delta s} p\(X(s), C(s), \Lambda(s)\)ds.
\earray
\eeq
For each $s\in [0, T_N]$,
by the mean value theorem, there exists a point $\wdt{X}(s)$ between $X(s)$ and $X(s-)$ such that
\bea\Phi\(X(s), \Lambda(s-) \)-\Phi\big(X(s-), \Lambda(s-)\big)\ad =-\Delta Y(s) \Phi'\big(\wdt{X}(s), \Lambda(s-)\big)\\
\ad \quad  + \Delta Z(s)  \Phi'\big(\wdt{X}(s), \Lambda(s-)\big).\eea
By \eqref{e.3.1}, we have
\beq{e.3.1.zz}
\barray
\Phi\big(X(s), \Lambda(s-)\big)-\Phi\big(X(s-), \Lambda(s-)\big) \le -q \Delta Y(s)  +r  \Delta Z(s).
\earray
\eeq
It follows from
\eqref{e.3.1.x}, \eqref{e.3.1.y}, \eqref{e.3.1.zz}, and the nonnegativity of $\Phi(\cdot, \cdot)$ that
\bea
\Phi(x, \al) \ad \ge  \E\int_{0}^{T_N} e^{-\delta s} p\big(X(s), \Lambda(s), C(s)\big) ds\\
\ad \qquad + \E \int_{0}^{T_N} e^{-\delta s} q d Y(s)  - \E \int_{0}^{T_N} e^{-\delta s} r  d Z(s).
\eea
Letting $N\to\infty$, it follows from \eqref{e.3.2} and the bounded convergence theorem that
$\Phi(x, \al)\ge J(x, \al, \Psi).$
Taking supremum over all $\Psi\in \mathcal{A}_{ x, \al}$, we obtain $\Phi(x, \al)\ge V(x, \al)$. The conclusion follows.
\qed
\end{proof}

Using Lemma \ref{lem:1}, we proceed to present an easily verifiable
condition for the finiteness of the value function.

\begin{thm}
	\label{thm:1} Let {\rm (A)} be satisfied.   Moreover, suppose that there is a positive constant $K$ such that \begin{equation}\label{cond-bi}b(x, \al)\le \delta x+K \ \  \text{for} \ \  (x, \al)\in \SS\times \M.\end{equation}
	Then there exist a positive constant $M$ such that
	$$V(x, \al)\le q x+ M \ \  \text{for} \ \  (x, \al)\in \SS\times \M.$$
\end{thm}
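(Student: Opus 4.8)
The plan is to apply \lemref{lem:1} to an explicit affine test function and simply read off the bound. First I would set
\[
\Phi(x,\al) = qx + M, \qquad (x,\al)\in\SS\times\M,
\]
with $M>0$ a constant to be fixed at the end. Since $\Phi(\cdot,\al)$ is affine it certainly belongs to $C^2(\SS)$, and because $q=a_1-a_2\ge 0$ (the net per-unit revenue of impulsive harvesting), together with $x\ge\lambda\ge 0$ and $M>0$, we get $\Phi\ge 0$ on $\SS\times\M$. Thus $\Phi$ is an admissible test function for \lemref{lem:1}, and everything reduces to checking that it satisfies the quasi-variational inequality \eqref{e.3.1}; once that is done, \lemref{lem:1} gives $V(x,\al)\le\Phi(x,\al)=qx+M$ directly.

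For the verification I would note $\Phi'(x,\al)=q$ and $\Phi''(x,\al)=0$, so the last two entries of the maximum in \eqref{e.3.1} are immediate: $q-\Phi'(x,\al)=0\le 0$, and $\Phi'(x,\al)-r=q-r=(a_1-a_2)-(a_1+a_3)=-(a_2+a_3)<0$. All the work is then in the first entry, $\mathcal{G}\Phi\le 0$. Here the vanishing of $\Phi''$ kills the diffusion contribution, and the essential algebraic cancellation happens in the Hamiltonian: since $p(x,\al,c)=a_1 f(x,c)-g(x,\al,c)$ and $\Phi'=q=a_1-a_2$,
\[
p(x,\al,c)-\Phi'(x,\al)\,f(x,c)=a_2 f(x,c)-g(x,\al,c).
\]

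It then remains to estimate $\mathcal{G}\Phi=q\,b(x,\al)-\delta(qx+M)+\max_{c\in\mathcal U}\big[a_2 f(x,c)-g(x,\al,c)\big]$. By (A)(b) the maps $f$ and $g$ are bounded and $g\ge 0$, so the Hamiltonian term is bounded above by the constant $B:=a_2\sup_{\R_+\times\mathcal U}|f|$, independent of $(x,\al)$. The growth hypothesis \eqref{cond-bi}, namely $b(x,\al)\le\delta x+K$, combined with $q\ge 0$, gives $q\,b(x,\al)-\delta q x=q\big(b(x,\al)-\delta x\big)\le qK$. Collecting these, $\mathcal{G}\Phi\le qK+B-\delta M$, which is $\le 0$ as soon as $M\ge(qK+B)/\delta$ (and $M>0$). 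With such an $M$ all three entries in \eqref{e.3.1} are nonpositive, the hypotheses of \lemref{lem:1} hold, and the theorem follows.

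The only genuine obstacle is the bound on $\mathcal{G}\Phi$, and it is there that all three structural hypotheses are consumed at once: the affine choice forces $\Phi'\equiv q$ so that the drift contribution is linear and is absorbed by the at-most-linear growth \eqref{cond-bi} of $b$; the identity $p-qf=a_2 f-g$ together with the boundedness in (A)(b) controls the regular-control Hamiltonian; and the sign $q\ge 0$ is what both permits applying \eqref{cond-bi} in the correct direction and guarantees $\Phi\ge 0$ for large $x$. The statement is thus naturally read under $a_1\ge a_2$; if $q<0$ the affine ansatz fails on both counts and a different argument would be required.
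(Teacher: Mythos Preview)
Your proposal is correct and follows exactly the approach sketched in the paper: apply \lemref{lem:1} to the affine test function $\Phi(x,\al)=qx+M$ and choose $M$ large enough that the first entry of \eqref{e.3.1} is nonpositive. Your explicit verification of the three inequalities, the cancellation $p-qf=a_2 f-g$, and the identification of the implicit hypothesis $q=a_1-a_2\ge 0$ are accurate elaborations of the paper's terse argument.
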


\begin{proof}
Define
	$$\Phi(x, \al)=	 q x+ \dfrac{Kq + \ka_0}{\delta} \ \  \text{for} \ \  (x, \al)\in \SS\times \M,$$
	where $$\ka_0 = \sup\limits_{(y, \al, c)\in \R_+\times\M\times \mathcal U}\Big( p(y, \al, c) - qf(y, c) \Big).$$ By assumption (A)(b), $\ka_0$ is finite.
	 Since $q = a_1 - a_2< r = a_1+a_3$ and $\Phi'(x, \al)=q$, it is clear that
	\beq{x.1}
	q -  \Phi' (x, \al)= 0, \ \  \Phi'(x, \al)-r <0 \ \ \text{for} \ \  (x, \al) \in \SS\times \M.
	\eeq
		By \eqref{cond-bi}, we have
	\beq{x.3}
	\barray
	\G\Phi(x, \al)
	\ad = b(x, \al) q -\delta \Big(q x+ \frac{Kq+\ka_0}{\delta} \Big)  +\max\limits_{c \in \mathcal{U}}\Big[p(x, \al, c) -  q  f(x, c)\Big]\\
	\ad \le (\delta x  + K)q - (\delta q x +  Kq + \ka_0) +\ka_0 \\
	\ad =0 \ \  \text{for} \ \  (x, \al) \in \SS\times \M.
	\earray
	\eeq
	It follows from \eqref{x.1} and \eqref{x.3} that
	$$
\max \bigg\{ \mathcal{G}\Phi(x, \al), q -  \Phi' (x, \al),  \Phi'(x, \al)-r
\bigg\}\le 0 \ \  \text{for} \ \  (x, \al) \in \SS\times \M.
$$
That is, $\Phi(\cdot, \cdot)$ solves the system of inequalities \eqref{e.3.1}. By virtue of Lemma \ref{lem:1}, $V(x, \al)\le \Phi(x, \al)$ for any $(x, \al)\in \SS\times \M.$
 This completes the proof.
\qed
	\end{proof}
	
	Next, we establish the continuity of the value function.

\begin{thm} \label{thm:2} Let {\rm (A)} be satisfied.
	Then the following assertions hold.
	\begin{itemize}
	\item[\rm (a)] For any $x, y\in  \SS$ and $\al\in \M$,
	\beq{e.3.5}V(x, \al)\ge q  (x-y)^+ -r  (y-x)^+    + V(y, \al).\eeq
	
	\item[\rm (b)] $V(\cdot)$ is Lipschitz continuous on $\SS\times \M$.
	\end{itemize}
\end{thm}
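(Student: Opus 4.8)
Both parts rest on a single "shift-the-initial-state" construction, and part (b) will drop out of part (a) by applying it twice. For part (a), fix $x,y\in\SS$ and $\al\in\M$ (the case $x=y$ being trivial), let $\e>0$, and choose a near-optimal admissible strategy $\Psi^y=(C^y,Y^y,Z^y)\in\mathcal{A}_{y,\al}$ with $J(y,\al,\Psi^y)\ge V(y,\al)-\e$; write $X^y$ for its trajectory started from $X^y(0-)=y$. I would build a competitor $\Psi^x\in\mathcal{A}_{x,\al}$ that performs a single impulse at time $0$ moving the state from $x$ to the post-jump value $X^y(0)$, and thereafter reproduces $\Psi^y$ verbatim on $(0,\tau]$. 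Since the prepended impulse lands exactly at $X^y(0)$ while $w\cd$ and $\Lambda\cd$ are unchanged, the two controlled processes coincide, $X^x(t)=X^y(t)$ for all $t\ge 0$, so $\tau^x=\tau^y$ and the running reward $\int e^{-\delta s}p\,ds$ together with all post-$0$ impulse rewards are identical. Hence $J(x,\al,\Psi^x)$ and $J(y,\al,\Psi^y)$ differ only through the time-$0$ impulse reward, which I would encode via $\phi(x,z):=q(x-z)^+-r(z-x)^+$.

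\textbf{The key algebraic point.} Writing $\phi(x,z)=-\max\{q(z-x),\,r(z-x)\}$, the function $-\phi$ is a maximum of two linear maps through the origin and is therefore sublinear (precisely because $q=a_1-a_2<a_1+a_3=r$), so $\phi$ is superadditive: $\phi(x,z)\ge\phi(x,y)+\phi(y,z)$. Applying this with $z=X^y(0)$ shows that merging the prepended jump $x\to y$ with $\Psi^y$'s own time-$0$ jump $y\to X^y(0)$ into the single net jump $x\to X^y(0)$ produces a reward at least $\phi(x,y)$ plus the reward of $\Psi^y$'s time-$0$ impulse; this simultaneously restores the constraint $\Delta Y(0)\Delta Z(0)=0$, since after merging only one of harvesting or renewing acts at time $0$. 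Consequently $J(x,\al,\Psi^x)\ge\phi(x,y)+J(y,\al,\Psi^y)\ge q(x-y)^+-r(y-x)^+ +V(y,\al)-\e$, and letting $\e\to 0$ yields \eqref{e.3.5}. (If the right-hand side of \eqref{e.3.5} is nonpositive the inequality is immediate from $V\ge 0$, a consequence of admissibility condition (c); otherwise one takes $\e$ small enough that $J(x,\al,\Psi^x)\ge 0$, so that $\Psi^x\in\mathcal{A}_{x,\al}$ and $V(x,\al)\ge J(x,\al,\Psi^x)$. The case $V(y,\al)=\infty$ forces $V(x,\al)=\infty$ and \eqref{e.3.5} holds trivially.)

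\textbf{Part (b).} I would apply \eqref{e.3.5} twice for $x>y$: directly it gives $V(x,\al)-V(y,\al)\ge q(x-y)$, while interchanging the roles of $x$ and $y$ in \eqref{e.3.5} gives $V(x,\al)-V(y,\al)\le r(x-y)$. Hence $|V(x,\al)-V(y,\al)|\le\max\{|q|,r\}\,|x-y|$ for all $x,y\in\SS$ and each fixed $\al$; since $\M$ is finite, this is exactly Lipschitz continuity of $V\cd$ on $\SS\times\M$.

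\textbf{Main obstacle.} I expect the crux to be the rigorous construction and admissibility verification of $\Psi^x$ in part (a): confirming that the merged time-$0$ impulse keeps $X^x\cd\in\SS$ (automatic, as it lands on $X^y(0)\in\SS$), that the non-simultaneity, uniqueness, and finiteness conditions (a)--(c) defining $\mathcal{A}_{x,\al}$ all survive the modification, and that the time-$0$ reward accounting through the superadditivity of $\phi$ is exact. Everything else is routine bookkeeping.
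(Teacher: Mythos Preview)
Your approach is correct and essentially the same as the paper's: both prepend a time-$0$ impulse to move the initial state from $x$ to $y$, then follow an arbitrary (or $\e$-optimal) strategy from $\mathcal{A}_{y,\al}$ and compare payoffs. The paper does this more directly by setting $\widetilde Y(t)=Y(t)+(x-y)^+$, $\widetilde Z(t)=Z(t)+(y-x)^+$ and reading off the exact identity $J(x,\al,\widetilde\Psi)=q(x-y)^+-r(y-x)^++J(y,\al,\Psi)$, without your merging/superadditivity step (it simply glosses over the constraint $\Delta Y(0)\Delta Z(0)=0$, which your argument handles more carefully); part (b) is then derived identically by swapping $x$ and $y$.
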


\begin{proof}\
	
(a) Fix  $\Psi=(C, Y, Z)\in \mathcal{A}_{y, \al}$. Define
$$\widetilde C(t)=C(t), \ \  \widetilde Y(t)=Y(t) + (x-y)^+, \ \  \widetilde Z(t)=Z(t) + (y - x)^+, \ \   t\ge 0,$$
and $\wdt \Psi = (\wdt C, \wdt Y, \wdt Z)$.
Then $\wdt \Psi \in \mathcal{A}_{x, \al}$
and
$$J(x, \al, \widetilde{\Psi})=q(x-y)^+ -r (y-x)^+    + J(y, \al, \Psi).$$
 	Since $V(x, \al)\ge J(x, \al, \widetilde{\Psi})$, we have
 	$$V(x, \al)\ge q (x-y)^+ -r (y-x)^+    + J( y, \al, \Psi),$$
	from which, \eqref{e.3.5}  follows by taking supremum over $\Psi\in \mathcal{A}_{y, \al}$.
	
	(b) Similar to \eqref{e.3.5}, we have
	\beq{e.3.6}V(y, \al)\ge q(y-x)^+ -r  (x-y)^+    + V(x, \al).\eeq
	In view of \eqref{e.3.5}, \eqref{e.3.6}, for any $x, y\in \SS$ and $\al\in \M$,
		\bea|V(x, \al)-V(y, \al)|\ad \le \big(|q| + |r|\big) |x-y|.\eea
	Thus, $V(\cdot)$ is Lipschitz continuous on $\SS\times \M$.
	\qed
	\end{proof}

For population models given by diffusion processes, under optimal or near-optimal harvesting strategies, one should keep the population sizes in a bounded set. In other words, if the initial population is too high, an impulsive harvesting should be performed instantaneously; see \cite{A1998,H2019,H2020}. We proceed to provide a proof of that result for a general setting of a controlled regime-switching diffusions with a mixed regular-singular control.

\begin{thm}\label{thm:3}
Suppose that there exists a number $U>\lambda$ such that
\beq{he}
 q\Big( b(x, \al) - \delta (x-U)\Big)
+ \sup\limits_{c\in \mathcal U}\Big(a_2  f(x,c) - g(x,\al, c)\Big)<0, \ \  (x, \al)\in (U, \infty)\times \M.\eeq
Then for each $(x, \al)\in (U, \infty)\times \M$,
	$$V(x, \al)= V(U, \al) + q (x-U).$$
\end{thm}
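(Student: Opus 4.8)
The plan is to establish the two inequalities $V(x,\al)\ge V(U,\al)+q(x-U)$ and $V(x,\al)\le V(U,\al)+q(x-U)$ separately. The lower bound is immediate from \thmref{thm:2}(a) applied with $y=U$: since $x>U$ forces $(x-U)^+=x-U$ and $(U-x)^+=0$, it gives $V(x,\al)\ge q(x-U)+V(U,\al)$. All the real work is in the reverse inequality.

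For the upper bound I would introduce the globally \emph{linear} test function $\Phi(y,\al)=V(U,\al)+q(y-U)$ on $\SS\times\M$ and first check that each of the three expressions in \eqref{e.3.1} is nonpositive on the region $(U,\infty)\times\M$. Since $\Phi'\equiv q$, the second expression is $q-\Phi'=0\le0$ and the third is $\Phi'-r=q-r=-(a_2+a_3)<0$. For the first, note that $\max_{c}[p(x,\al,c)-q f(x,c)]=\sup_{c}[a_2 f(x,c)-g(x,\al,c)]$ because $a_1-q=a_2$; hence $\mathcal{G}\Phi(x,\al)=q\,b(x,\al)-\delta\big(V(U,\al)+q(x-U)\big)+\sup_c[a_2 f-g]$, and hypothesis \eqref{he} together with $V(U,\al)\ge0$ forces $\mathcal{G}\Phi<0$ on $(U,\infty)$. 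This is precisely where \eqref{he} enters.

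Next I would repeat the Dynkin computation of \lemref{lem:1}, but localized to the stochastic interval $[0,\rho\wedge T_N]$, where $\rho=\inf\{t\ge0:X(t)\le U\}$ and $T_N=N\wedge\tau_N\wedge\tau$ is the localizing time from that proof. Because $U>\lambda$, the process must cross $U$ before leaving $\SS$, so $\rho\le\tau$. For $s<\rho$ one has $X(s)>U$, so the quasi-variational inequality above controls the drift, and (since $\Phi$ is linear with slope $q$) it controls the jump terms exactly, yielding
$$\Phi(x,\al)\ge \E\Big[\int_0^{\rho\wedge T_N}e^{-\delta s}p\,ds+\int_0^{\rho\wedge T_N}e^{-\delta s}q\,dY-\int_0^{\rho\wedge T_N}e^{-\delta s}q\,dZ + e^{-\delta(\rho\wedge T_N)}\Phi\big(X(\rho\wedge T_N),\Lambda(\rho\wedge T_N)\big)\Big].$$
The terminal value is handled by \thmref{thm:2}(a) once more: if the process leaves $(U,\infty)$ through a harvesting jump that overshoots to $X(\rho)<U$, then $V(X(\rho),\cdot)\le V(U,\cdot)+q(X(\rho)-U)=\Phi(X(\rho),\cdot)$, so in every case $\Phi(X(\rho),\cdot)\ge V(X(\rho),\cdot)\ge0$. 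By the strong Markov property and the definition of $V$ as a supremum over admissible controls, $e^{-\delta\rho}V(X(\rho),\cdot)$ dominates the continuation of the payoff of the shifted control after $\rho$; replacing $q$ by $r$ in the $dZ$ term (legitimate since $q\le r$ and $Z$ is nondecreasing) and letting $N\to\infty$ then reassembles the pre-$\rho$ and post-$\rho$ contributions into the full functional, giving $\Phi(x,\al)\ge J(x,\al,\Psi)$. Taking the supremum over $\Psi\in\mathcal{A}_{x,\al}$ yields $V(x,\al)\le V(U,\al)+q(x-U)$, and combining with the lower bound proves the theorem.

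The main obstacle is the passage $N\to\infty$ in the terminal term on the event $\{\rho=\infty\}$, where the population never returns to $U$ and may drift to infinity. On $\{T_N<\rho\}$ one has $X(T_N)\ge U$, so $\Phi(X(T_N),\cdot)=V(U,\cdot)+q(X(T_N)-U)\ge0$ when $q\ge0$; this nonnegativity is what lets me discard that piece of the localized terminal term while preserving the inequality, after which dominated and monotone convergence (exactly as in \lemref{lem:1}, using boundedness of $p$ and monotonicity of $Y,Z$) dispose of the remaining limits. The delicate points to make rigorous are that dropping this nonnegative term is justified and that the continuation estimate at $\rho$ is applied to a genuinely admissible shifted control; both are routine but must be stated carefully, and the linear-growth consequence of \eqref{he}, namely $b(x,\al)\le\delta x+\text{const}$, is what keeps the localization self-consistent.
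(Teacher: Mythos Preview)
Your overall strategy---a localized verification argument on $(U,\infty)$ via Dynkin's formula, stopped at the first downcrossing time $\rho=\inf\{t:X(t)\le U\}$, together with the obvious lower bound from \thmref{thm:2}(a)---is exactly the route the paper takes. The paper uses the test function $\Phi(y,\al)=q(y-U)+\e$ on $[U,\infty)$ (extended to a strictly positive $C^2$ function on all of $\SS$), drops the terminal term by positivity, lets $N\to\infty$, and then bounds the post-$\widetilde\gamma_U$ payoff by $V(U,\al)$ in the one-line step recorded as ``As a result.'' Your version folds that last step into the test function itself and handles the continuation via the strong Markov property, which is a perfectly natural variant.

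There is, however, one genuine gap in your choice of test function. Because your $\Phi(y,\al)=V(U,\al)+q(y-U)$ depends on the regime through $V(U,\al)$, the \emph{full} generator of the switching diffusion applied to $\Phi$ produces the additional term $\sum_{\ell}\Gamma_{\al\ell}\,\Phi(y,\ell)=\sum_{\ell}\Gamma_{\al\ell}V(U,\ell)$, which your displayed expression for $\mathcal{G}\Phi$ omits. Hypothesis~\eqref{he} together with $V(U,\al)\ge 0$ does control $q\,b(x,\al)-\delta q(x-U)+\sup_c[a_2 f-g]-\delta V(U,\al)$, but it says nothing about $\sum_{\ell}\Gamma_{\al\ell}V(U,\ell)$; if some regime $\ell$ has $V(U,\ell)$ substantially larger than $V(U,\al)$, this switching contribution can be positive and your claimed inequality $\mathcal{G}\Phi<0$ need not hold. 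The paper's constant-in-$\al$ choice $\Phi(y,\al)=q(y-U)+\e$ sidesteps this entirely, since then the switching term vanishes identically and the Dynkin estimate on $(U,\infty)$ is clean. The simplest fix to your argument is to adopt the paper's $\al$-independent $\Phi$ for the Dynkin step and keep your strong-Markov treatment of the terminal piece (which is precisely what the paper's terse ``As a result'' is doing).
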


\begin{rem} {\rm
By assumption (A)(b),
$$\ka_1:= \sup\limits_{(x, \al, c)\in \R_+\times \M\times \mathcal U}\Big(a_2  f(x,c) - g(x,\al, c)\Big)<\infty.$$
 If $\limsup\limits_{x\to \infty}b(x, \al)< - \ka_1/q$
 for each $\al \in \M$,
  then we can take $$U =\sup\big\{x>\lambda+1: \sup_{\al \in \M} b(x, \al)\ge  - \ka_1/q \big\}.$$
}
\end{rem}

\begin{proof}
	Fix some $(x, \al) \in  (U, \infty)\times \M$,
$\Psi=(C, Y, Z)\in \mathcal{A}_{x, \al}$, and let $X$
be the corresponding harvested process.
	Let $\e\in (0, 1)$ be a constant and
	define  \beq{5e:2}\Phi(y, \al)=q (y-U) +\e \ \  \text{for} \ \  (y, \al)\in [U, \infty)\times \M.\eeq
 We can extend $\Phi(\cdot, \cdot)$ to the entire $\SS\times \M$ so that $\Phi(\cdot, \al)$ is a $C^{2}$ function for each $\al\in \M$, and $\Phi(y, \al)> 0$  for all $(y, \al)\in  \SS \times \M$.
	 By assumption \eqref{he}, we can check that
	 \beq{}\notag
	 (\mathcal{L}-\delta)\Phi(y, \al) + a_2  f(y,c) - g(y, \al, c)< 0,  \ \  (y, \al, c)\times [U, \infty) \times \M\times \mathcal{U}.\eeq
	 Thus,
	 \beq{new.a}
\barray
 (\mathcal{L}-\delta)\Phi(y, \al) - qf(y, c)\ad < - \big[ a_2 f(y, c) - g(y, \al, c)\big] - qf(y, c)\\
 \ad = - \big[(a_2+q)f(y, c) - g(y, \al, c)\big]\\
 \ad = - p(y, \al, c), \ \  (y, \al, c)\times [U, \infty) \times \M\times \mathcal{U}.
\earray
\eeq
	 For an integer $N$ satisfying $N>U$,
	we define
	$$\tau_N=\inf\{t\ge 0:   X(t)\ge N\}, \ \ 	\wdt \gamma_U=\inf\{t\ge 0:    X(t)\le U\}, \ \  T_N = N\wedge \tau_N\wedge \wdt \gamma_U.
	$$
We have
$T_N \to \wdt \gamma_U$ almost surely as $N \to\infty$.
Note that  $\wdt \gamma_U\le \tau$. 	
By Dynkin's formula, 	\beq{new.b}\barray
	\E \ad \big[e^{-\delta T_N}\Phi\( X(T_N), \Lambda(T_N)\)\big]-\Phi(x, \al)= \E\int_{0}^{T_N} e^{-\delta s}(\L-\delta)\Phi\(X(s), \Lambda(s)\)ds\\
\ad \qquad  -\E \int_{0}^{T_N} e^{-\delta s}\Phi'\(X(s), \Lambda(s)\) f\big(X(s), C(s)\big) ds \\
\ad \qquad
+ \E \sum\limits_{0\le s\le T_N}e^{-\delta s}\Big[\Phi\(X(s), \Lambda(s-)\)-\Phi\(X(s-), \Lambda(s-)\)\Big].
	\earray
	\eeq	
For each $s\in [0, T_N]$, we have
\beq{e.3.1.z}
\barray
\Phi\(X(s), \Lambda(s-)\)-\Phi\(X(s-), \Lambda(s-)\)\ad = -q\Delta Y(s)  + q \Delta Z(s)\\
\ad \le -q \Delta Y(s)  + r  \Delta Z(s).
\earray
\eeq
	We obtain from \eqref{new.a}, \eqref{new.b}, and \eqref{e.3.1.z} that
	\beq{3e.3.3}\barray
	\E \ad \big[e^{-\delta T_N}\Phi\(X(T_N), \Lambda(T_N)\)\big]-\Phi(x, \al) \le
 -\E\int_{s_0}^{T_N} e^{-\delta s} p\big(X(s),  \Lambda(s), C(s)\big) ds \\
 \ad  \qquad - \E \sum\limits_{0\le s\le T_N}e^{-\delta s} q\Delta Y(s)  + \E \sum\limits_{0\le s\le T_N}e^{-\delta s}r \Delta Z(s).
	\earray
	\eeq
	Since $\Phi(y, \al)>0$  for any $(y, \al)\in  \SS\times \M$, it follows from \eqref{3e.3.3}  that
	\bea
	\ad \E\int_{0}^{T_N} e^{-\delta s}p\big(X(s),  \Lambda(s), C(s)\big)ds
\\
\ad \qquad \qquad + \E \sum\limits_{0\le s\le T_N}e^{-\delta s} q \Delta Y(s)  - \E \sum\limits_{0\le s\le T_N}e^{-\delta s}r \Delta Z(s)\\
\ad \qquad \le \Phi (x, \al).
	\eea
	Letting $N\to\infty$,  we obtain
	\beq{}\barray
	\ad \E\int_{0}^{ \wdt \gamma_U } e^{-\delta s} p\big(X(s),  \Lambda(s), C(s)\big)ds
\\
\ad \qquad\qquad + \E \sum\limits_{0\le s\le \wdt \gamma_U }e^{-\delta s} q \Delta Y(s)  - \E \sum\limits_{0\le s\le \wdt \gamma_U}e^{-\delta s}r \Delta Z(s)\\
\ad \qquad \le \Phi (x, \al).
	\earray
	\eeq
	As a result,
\bea
J(x, \al, \Psi)\ad  \le  V(U, \al) + \Phi(x, \al)\\
\ad = V(U, \al) + q (x-U) +\e.
\eea
Letting $\e\to 0$ yields
\beq{4e:2}
\barray
J(x, \al, \Psi)\ad \le V(U, \al) + q(x-U).
\earray
\eeq
Since \eqref{4e:2} holds for any $\Psi\in \mathcal{A}_{x, \al}$,
\beq{4e:2aaa}
V(x, \al) \le V(U, \al) + q (x-U).
\eeq
	On the other hand, it is obvious (by harvesting instantaneously $x-U$ at time $t=0$) that
	\beq{4e:3}
	V(x, \al)\ge V(U, \al) +q   (x-U).\eeq
	In view of \eqref{4e:2aaa} and \eqref{4e:3}, $$V( x, \al)=V(U, \al) + q (x-U).$$  The conclusion follows.
	\qed
\end{proof}

We proceed to discuss the impact of large white noise. By using Lemma \ref{lem:1}, we construct an upper bound for the value function. Then we show that the value function approaches the upper bound as the white noise intensity approaches infinity. The  following result is motivated by the presence of multiplicative noise.

\begin{thm} \label{thm:4}
	\label{thm:4} Suppose that $\lambda>0$.
Moreover,
	there exist positive constants $\beta\in (0,1)$, $K$, and $N$ such that
	\beq{e:20}xb(x, \al) \le K(1+x^2), \ \  	b(x, \al)q -\delta q(x-\lambda)\le K(x^{\beta}+1),\eeq
	and
	$$|\sg(x, \al)|\ge Nx \ \  \text{for} \ \  (x, \al)\in \SS\times \M.$$
	Then
	$$\lim\limits_{N\to \infty}V(x, \al)=  q(x-\lambda),$$	
	uniformly on $ [\lambda, M]\times \M$ for any positive constant $M>\lambda$.
\end{thm}

\begin{proof} Let $M>\lambda$.
	For a fixed $\e \in (0, r-q)$,
	let $K_\e>0$ be sufficiently large such that $$\dfrac{(M+1)^\beta}{K_\e}\le \e, \ \  \dfrac{\beta}{K_\e}\le r-q-\e.$$
	Define
	$$\Phi(x, \al)=q(x-\lambda) +  \dfrac{(x+1)^{\beta}}{K_\e}, \ \  (x, \al)\in \SS\times\M.$$
	Detailed computations lead to
	\bea
	\aad \Phi'(x, \al) = q +\dfrac{\beta}{K_\e (x+1)^{1-\beta}},\\
	\aad  \Phi''(x, \al) =  -\dfrac{\beta(1-\beta)}{K_\e (x+1)^{2-\beta}},  \ \  (x, \al)\in \SS\times \M.
	\eea
	It is clear that
	\beq{e:21}q-\Phi'(x, \al)\le 0, \ \  \Phi'(x, \al)-r\le 0 \ \  \text{for} \ \   (x, \al)\in \SS\times \M.\eeq
	By assumption (A)(b),
	$$\ka_2:=\sup\limits_{(x, \al,c) \in \R_+\times \M\times \mathcal{U}}\Big[p(x, \al, c) -  \Phi' (x, \al)  f(x, c)\Big]<\infty.$$
		Hence,
	\beq{e:22}
	\barray
	\G \Phi(x, \al)\ad \le  b(x, \al) q  + \dfrac{\beta}{K_\e (x+1)^{1-\beta}}b(x, \al)  \\
	\ad -\dfrac{\beta(1-\beta)N^2x^{2}}{2K_\e(x+1)^{2-\beta}} - \delta q(x-\lambda)- \dfrac{\delta (x+1)^{\beta}}{K_\e} + \ka_2.\\
	\earray
	\eeq
	For $x\ge \lambda>0$, by the first inequality in \eqref{e:20},
	\bea\dfrac{\beta}{K_\e (x+1)^{1-\beta}}b(x, \al)\ad \le \dfrac{\beta}{K_\e x(x+1)^{1-\beta}}xb(x, \al)\\
	\ad \le \dfrac{\beta K(1+x^2)}{K_\e x^{2-\beta}} \\
	\ad \le \dfrac{\beta K}{K_\e }\Big( x^\beta + \dfrac{1}{\lambda^{2-\beta}}\Big).
	\eea
	It follows from \eqref{e:20} and \eqref{e:22} that for $x\ge \lambda$,
\bea \G \Phi(x, \al)\ad \le K(x^\beta+1)+ \dfrac{\beta}{K_\e (x+1)^{1-\beta}}b(x, \al) -\dfrac{\beta(1-\beta)N^2x^{2}}{K_\e(x+1)^{2-\beta}} + \ka_2\\
\ad \le \Big[K+\dfrac{\beta K}{K_\e}-\dfrac{\beta(1-\beta)N^2x^{2-\beta}}{2K_\e (x+1)^{2-\beta}}\Big]x^{\beta}+K+\dfrac{\beta K}{K_\e \lambda^{2-\beta}  } +\ka_2.
\eea
Hence there exists a positive number $N_0$ such that for any $N\ge N_0$,
\beq{e:24}
\G\Phi(x, \al)\le 0 \ \  \text{for} \ \  (x, \al)\in \SS\times \M.
\eeq
By \eqref{e:21}, \eqref{e:24}, and  Lemma \ref{lem:1}, we obtain
	$$V(x, \al)\le q(x-\lambda) +  \dfrac{(x+1)^{\beta}}{K_\e}, \ \  (x, \al)\in \SS\times \M$$
	provided that $N\ge N_0$.
	In particular, since $\dfrac{(M+1)^\beta}{K_\e}\le \e$, then for $N\ge N_0$, we have
	\beq{e:24xx}V(x, \al)\le q(x-\lambda) +\e \ \  \text{for}\ \  (x, \al)\in [\lambda, M]\times \M.\eeq
	 Let $\Psi=(C, Y, Z)\in \mathcal{A}_{x, \al}$ given by
	 $$C(t)=0, \ \  Y(t)=x-\lambda, \ \  Z(t)=0 \ \  \text{for} \ \  t\ge 0.$$
	 Then $J(x, \al, \Psi)=q(x-\lambda)$. It follows that
 $V(x, \al)\ge q(x-\lambda)$. This together with \eqref{e:24xx}
 yields
 that
 $$0\le V(x, \al)-q(x-\lambda)\le \e \ \  \text{for} \ \  (x, \al)\in [\lambda, M]\times \M,$$
 which leads to the desired conclusion.
 \qed
\end{proof}

We have a similar result regarding to the case of additive noise.

\begin{thm} \label{thm:5}
	\label{thm:5} Suppose that $\lambda>0$.
	 Moreover,
	there exist positive constants $K$ and $N$ such that
	\beq{e:25}	xb(x, \al) \le K(1+x^2),   \ \  b(x, \al)q -\delta q(x-\lambda)\le K,\eeq
	and
	$$|\sg(x, \al)|\ge N \ \  \text{for} \ \  (x, \al)\in \SS\times \M.$$
	Then
	$$\lim\limits_{N\to \infty}V(x, \al) = q(x-\lambda),$$	
	uniformly on $[\lambda, M]\times \M$ for any positive constant $M>\lambda$.
\end{thm}

\begin{proof} Let $M>\lambda$.
	For a fixed $\e\in (0, r-q)$,
	let $\beta\in (0,1)$ be such that
$\beta K<\delta$
	and
	let $K_\e>0$ be sufficiently large such that
	 $$\dfrac{\beta}{K_\e}\le r-q-\e, \ \  \dfrac{ (M+1)^\beta }{ K_\e }\le \e.$$
	Define
	$$\Phi(x, \al)=q(x-\lambda) +  \dfrac{(x+1)^{\beta}}{K_\e} \ \  \text{for}\ \  (x, \al)\in \SS\times\M.$$ 	
	We have
	\bea
	\aad \Phi'(x, \al) = q +\dfrac{\beta}{K_\e (x+1)^{1-\beta}},\\
	\aad  \Phi''(x, \al) =  -\dfrac{\beta(1-\beta)}{K_\e (x+1)^{2-\beta}}, \ \  (x, \al)\in \SS\times \M.
	\eea
	We can check that
	\beq{e:212}q-\Phi'(x, \al)\le 0, \ \  \Phi'(x, \al)-r\le 0 \ \  \text{for} \ \   (x, \al)\in \SS\times \M.\eeq
	By assumption (A)(b),
	$$\ka_3:=\sup\limits_{(x, \al,c) \in \R_+\times \M\times \mathcal{U}}\Big[p(x, \al, c) -  \Phi' (x, \al)  f(x, c)\Big]<\infty.$$	Hence,
	\beq{eee:40}
	\barray
	\G \Phi(x, \al)\ad \le  b(x, \al) q  + \dfrac{\beta}{K_\e (x+1)^{1-\beta}}b(x, \al)  \\
	\ad -\dfrac{\beta(1-\beta)N^2}{2K_\e(x+1)^{2-\beta}} - \delta q(x-\lambda)- \dfrac{\delta (x+1)^{\beta}}{K_\e} + \ka_3.
	\earray	\eeq
	For $x\ge \lambda>0$, by the first inequality in \eqref{e:25},
	\bea\dfrac{\beta}{K_\e (x+1)^{1-\beta}}b(x, \al)\ad \le \dfrac{\beta}{K_\e x(x+1)^{1-\beta}}xb(x, \al)\\
	\ad \le \dfrac{\beta K(1+x^2)}{K_\e x^{2-\beta}} \\
	\ad \le \dfrac{\beta K}{K_\e }\Big( x^\beta + \dfrac{1}{\lambda^{2-\beta}}\Big).
	\eea
	It follows from the second inequality in \eqref{e:25} and \eqref{eee:40} that for $x\ge \lambda$, 	\bea \G \Phi(x, \al)\ad \le K + \dfrac{\beta K}{K_\e }\Big( x^\beta + \dfrac{1}{\lambda^{2-\beta}}\Big)  -\dfrac{\beta(1-\beta)N^2}{2K_\e(x+1)^{2-\beta}} - \dfrac{\delta x^{\beta}}{K_\e}+ \ka_3\\
	\ad \le K +\dfrac{\beta K}{K_\e\lambda^{2-\beta}} + \ka_3 -
	\dfrac{\delta -\beta K}{K_\e}x^\beta
	-\dfrac{\beta(1-\beta)N^2}{2K_\e (x+1)^{2-\beta}}.
	\eea
	Since $\beta K<\delta$, we can choose a constant
	 $\lambda_1>\lambda$  such that
	$$K +\dfrac{\beta K}{K_\e\lambda^{2-\beta}} + \ka_3 -
	\dfrac{\delta -\beta K}{K_\e}x^\beta\le 0 \ \  \text{for} \ \  (x, \al)\in [\lambda
	_1, \infty)\times \M.$$
	There exists a positive number $N_0$ such that for any $N\ge N_0$,
	$$K +\dfrac{\beta K}{K_\e\lambda^{2-\beta}} + \ka_3 	-\dfrac{\beta(1-\beta)N^2}{2K_\e (x+1)^{2-\beta}}\le 0 \ \  \text{for}\ \  (x, \al)\in [\lambda, \lambda_1].$$
	Hence for $N\ge N_0$,
\beq{e:24x}
\G\Phi(x, \al)\le 0 \ \  \text{for} \ \   (x, \al)\in \SS\times \M.
\eeq
By	 \eqref{e:212}, \eqref{e:24x}, and Lemma \ref{lem:1}, we have 	$$V(x, \al)\le q(x-\lambda) +  \dfrac{(x+1)^{\beta}}{K_\e} \ \  \text{for} \ \  (x, \al)\in \SS\times \M$$
	provided that $N\ge N_0$.
	In particular, since $\dfrac{(M+1)^\beta}{K_\e}\le \e$,
for $N\ge N_0$,
	\beq{e:25a}V(x, \al)\le q(x-\lambda) +\e \ \  \text{for} \ \ (x, \al)\in [\lambda, M]\times \M.\eeq
	 Let $\Psi=(C, Y, Z)\in \mathcal{A}_{x, \al}$ given by
	 $$C(t)=0,  \ \  Y(t)=x-\lambda, \ \  Z(t)=0 \ \  \text{for} \ \ t\ge 0.$$
	 Then $J(x, \al, \Psi)=q(x-\lambda)$. It follows that
 $V(x, \al)\ge q(x-\lambda)$. This together with \eqref{e:25a} gives us that
 $$0\le V(x, \al)-q(x-\lambda)\le \e \ \  \text{for} \ \ (x, \al)\in [\lambda, M]\times \M,$$
 which leads to the desired conclusion.
 \qed
\end{proof}

\section{Numerical Approximation}
\label{sec:num}

Formally, the associated Hamilton-Jacobi-Bellman equation of the underlying problem is given by
 \beq{hjb}
 \barray
\aad \max \bigg\{ \mathcal{G}\Phi(x, \al), q - \Phi'(x, \al),\Phi'(x, \al) -r
\bigg\}=0 \ \  \text{for} \ \ (x, \al)\in \SS\times \M,\\
\aad \Phi(x, \al) =0 \ \  \text{for} \ \ (x, \al)\notin \SS\times \M,
\earray
\eeq
where
$$\mathcal{G}\Phi(x, \al) = (\mathcal{L} -\delta ) \Phi (x, \al)+\max\limits_{c \in \mathcal{U}}\Big[p(x, \al, c) - \Phi' (x, \al)  f(x, c)\Big].$$
A closed-form solution to \eqref{hjb} is virtually impossible to obtain. Thus, we
proceed with
 the Markov chain approximation method; see \cite{Jin12,KM91,Kushner92}. That is,
we
construct a discrete time, finite-state,
controlled  Markov chain
to approximate the controlled switching diffusions.
In \cite{Jin12}, the authors applied that method to solve a harvesting-type problem for a regime-switching diffusion with a regular control and a singular control. In this paper, we need to adopt and modify the approximation to fit the combination of a two-sided singular control and regular control formulation.
In view of Theorem \ref{thm:3},
 we only need to choose a large positive integer $U$ and compute the value function on $\SS\cap [0, U]=[\lambda, U]$. With $x\in [\lambda, U]$, we can rewrite \eqref{e.2} as
\beq{e.2.x}
\barray
 X(t) \ad =x+\int_{0}^t \big( b(X(s), \Lambda(s)
\big) -f\big(X(s),C(s) \big)ds +  \int_{0}^t  \sigma\big(X(s), \Lambda(s)\big) dw(s)\\
\ad \qquad\qquad  - Y(t) + Z(t).
\earray
\eeq
The payoff functional is
\beq{e.4.x}
\barray
\aad J( x, \al, \Psi)= \E \bigg[\int_{0}^{\tau} e^{-\delta s} p(X(s), \Lambda(s), C(s))ds  \\
\ad \qquad \qquad+ \int_{0}^{\tau} e^{-\delta s} qdY(s)-\int_{0}^{\tau} e^{-\delta s} r  dZ(s)\bigg].
\earray
\eeq

\subsection{Approximating Markov Chains}
Let $h$ be a discretization parameter for $X\cd$.
Assume without loss of generality that $U$ is a multiple of $h$.
Define
$$S_{h}: = [0, U]\cap \{x\in [\lambda - h, U]: x= k h, k\in \mathbb{Z}_+\}.$$
Let $\{(X^h_n, \Lambda^h_n): n\in \mathbb{Z}_+\}$
be a discrete-time controlled Markov chain with state space $S_{h}\times \M$.
For each $n$, the increments of the chain $\Delta X^h_n= X^h_{n+1}-X^h_n$ approximates exactly one of the following quantities dynamically.

\begin{itemize}
\item Diffusion step: $\big(b(X(t), \Lambda(t)) - f(X(t), C(t)\big)dt + \sg(X(t), \Lambda(t))dw(t)$.
\item Impulsive harvesting step: $-dY(t)$.
\item Impulsive renewing step: $dZ(t)$.
\end{itemize}
We use $\pi^h_n$ to denote the type of step $n$. We set $\pi^h_n=0$ if step $n$ is a diffusion step, $\pi^h_n=1$ if step $n$ is an impulsive harvesting step, and $\pi^h_n=-1$ if step $n$ is an impulsive renewing  step. Each of these steps is described precisely in what follows.

(a) If $\pi^h_n=0$, a regular control $C^h_n$ is exercised so that $\Delta X^h_n$ is to behave like an increment of $\int (b -f) dt +\sg dw$ over a small time interval.
(b) If $\pi^h_n=1$, a regular control $C^h_n$ and an impulsive harvesting $\Delta Y^h_n=h$ are applied.
(c) If $\pi^h_n=-1$, a regular control $C^h_n$ and an impulsive renewing $\Delta Z^h_n=h$ are applied.

Thus, the amounts of impulsive harvesting and impulsive renewing at any step $n$ are $\Delta Y^h_n = h I_{\{\pi^h_n=1\}}$
and $\Delta Z^h_n = h I_{\{\pi^h_n=-1\}}$, respectively.
Let $C^h =\{C^h_n\}$ be a sequence of regular controls. The space of regular controls is given by $\mathcal{U}$. Let $\pi^h = \{\pi_n^h\}_{n\ge 0}$ denote the sequence of control types.
We denote by $p^h\((x,\al),(y, \ell) |\pi, c\)$ the transition probability from state $(x, \al)$ to another state $(y, \ell)$ under the control type $\pi$ and regular control $c$.
Denote
$\mathcal{F}^h_n=\sigma\{X^h_k, \Lambda^h_k, \pi^h_k, C^h_k, k\le n\}$.

The sequence $\{(\pi^h_n, C^h_n)\}$
is said to be admissible if it satisfies the following conditions:
\begin{itemize}
	\item[{\rm (a)}]
	$\pi^h_n, C^h_n$ are
	$\sigma\{X^h_0, \dots, X^h_{n}, \Lambda^h_0, \dots, \Lambda^h_{n}, \pi^h_0, \dots, \pi^h_{n-1}, C^h_1, \dots, C^h_{n-1}\}-\text{adapted},$
	\item[{\rm (b)}]  For any $(x, \al)\in S_h\times \M$, we have
	\begin{equation*}
	\barray
	\P\{ X^h_{n+1} =x, \Lambda^h_{n+1}=\al | \mathcal{F}^h_n\}&= \P\{X^h_{n+1} = x, \Lambda^h_{n+1}=\al | X^h_n, \Lambda^h_n, \pi^h_n, C^h_n\} \\
	&= p^h\big( \big( X^h_n,\Lambda^h_n) ,  (x, \al) | \pi^h_n, C^h_n\big),
	\earray
	\end{equation*}
	\item[{\rm (c)}] $\pi^h_n \in \{0, \pm 1\}, C^h_n \in \mathcal{U}$, $X^h_n\in S_h, \Lambda^h_n\in \M$ for all $n\in \mathbb{Z}_+$.
\end{itemize}
The class of all admissible control sequences $(\pi^h, C^h)$ for initial state $(x, \al)$ will be denoted by
$\mathcal{A}^h_{x, \al}$.

For each
 $(x, \al,  i, c)\in S_h\times \M \times \{0, \pm1\}\times \mathcal{U}$,
we define
a family of the interpolation intervals $\Delta t^h (x,\al,  i, c)$. The values of $\Delta t^h (x, \al,  i, c)$ will be specified later. Then we define
\beq{e.4.3}
\barray
\aad t^h_0 = 0,\ \   \Delta t^h_k = \Delta t^h(X^h_k, \Lambda^h_k, \pi^h_k, C^h_k),
\ \   t^h_n = \sum\limits_{k=0}^{n-1} \Delta t^h_k.\\
\earray
\eeq
For $(x, \al)\in S_h\times \M$ and $(\pi^h, C^h)\in \mathcal{A}^h_{x, \al}$, the payoff functional for the controlled Markov chain is defined as
\beq{e.4.4}
\barray
J^h(x, \al, \pi^h, C^h) & =  \E\sum\limits_{k=0}^{\eta_h-1} e^{-\delta t_k^h}\bigg\{p(X^h_k, \Lambda^h_k, C^h_k) \Delta t^h_k + q  \Delta Y^{h}_k  -r \Delta Z^{h}_k \bigg\},
\earray
\eeq
with $\eta_h = \inf\{n\ge 0: X^h_n\notin \SS\}$.
The value function of the controlled Markov chain is
\beq{e.4.5}
V^h(x, \al) = \sup\limits_{(\pi^h, C^h)\in \mathcal{A}^h_{x, \al}} J^h ( x, \al, \pi^h, C^h).
\eeq
The corresponding dynamic programming equation for the discrete approximation is given by
$V^h(x, \al)=\max\{  V^{h}_{i, c}(x, \al): i\in \{-1, 0, 1\}, c\in \mathcal{U}\}$
for any $(x, \al)\in S_h\times \M$, where 
\begin{equation*}
\barray
\aad V^{h}_{1, c}(x, \al) = \big(  V^{h}(x-h, \al) + qh \big) I_{\{x>\lambda\}},\\
\aad V^{h}_{-1, c}(x, \al) =  \big( V^{h}(x+h, \al) - rh \big) I_{ \{x<U\}},\\
\aad V^{h}_{0, c}(x, \al) = e^{-\delta \Delta t^h(x, \al, 0, c)} \sum\limits_{(y, \ell)\in S_h} V^h(y, \ell)p^h\big( (x, \al), (y, \ell)| 0, c \big)I_{\{ x<U \} }\\
\aad \hspace{5cm}  + p(x, \al, c)\Delta t^h(x, \al, 0, c)I_{\{ x<U \} }.
\earray
\end{equation*}
Note that $V^h(x, \al) = 0$ for $(x, \al)\notin S_h\times \M$.

\subsection{Transition Probabilities}
Let $\E^{h, \pi, c}_{x,  \al, n}$, $\Cov^{h, \pi, c}_{ x, \al, n}$ denote the conditional expectation and covariance given by
$$\{X_k^h, \Lambda_k^h, \pi_k^h, C^h_k, k\le n-1, X_n^h=x,  \Lambda_n^h=\al, \pi^h_n=\pi, C^h_n=c\},$$
respectively. We proceed to define transition probabilities $p^h \big( (x, \al), (y, \ell) | \pi, c\big)$ so that the controlled Markov chain $\{(X^h_n, \Lambda^h_n)\}$ is locally consistent with respect to $(X\cd, \Lambda\cd)$
in the sense that the following hold.
\beq{e.4.5a}
\barray
\aad \E^{h, 0, c}_{x, \al, n}\Delta X_n^h = {b}(x, \al) \Delta t^h(x, \al, 0, c) + o(\Delta t^h(x, \al, 0, c)),\\
\aad Cov^{h, 0, c}_{x, \al, n}\Delta X_n^h = \sg^2(x, \al)\Delta t^h(x, \al, 0, c) + o(\Delta t^h(x, \al, 0, c)),\\
\aad \sup\limits_{n, \ \omega} |\Delta X_n^h| \to 0 \ \hbox{ as } \ h \to 0.
\earray
\eeq
To this end, motivated by the procedure in \cite{Jin12,Kushner92},
 we construct the transition probabilities below. For $(x, \al)\in S_h\times \M$, $\lambda \le x<U$ and $c\in \mathcal{U}$, we define
 $$Q_h(x, \al, 0, c) =  \sg^2(x, \al) +h |b(x, \al) - f(x, c)| -h^2\Gamma_{\al\al}+\zeta(h).$$
 We set $\zeta(h) = h$. If, in addition, $\sg^2(x, \al)>0$ for any $(x, \al)\in \SS\times \M$, we can simply take $\zeta(h)=0$. Then we define
 \beq{e.4.7}
\barray
\aad p^h \((x, \al) , (x+ h, \al) | 0, c\) =
\dfrac{ \sg^2(x, \al)/2 +\big(b(x, \al)-f(x ,c)\big)^+ h}{Q_h(x, \al, 0, c)}, \\
\aad p^h \((x, \al), (x-h , \al) | 0, c\) =
\dfrac{ \sg^2(x, \al)/2+\big(b(x, \al)-f(x ,c)\big)^- h }{Q_h(x, \al, 0, c)}, \\
\aad   p^h \( (x, \al), (x, \ell) | 0, c\) =\dfrac{h^2 \Gamma_{\al \ell} }{ Q_h (x, \al, 0, c)} \text{ for }\al\ne \ell , \\
\aad p^h \( (x, \al), (x, \al) | 0, c\) =\dfrac{\zeta(h)  }{ Q_h (x, \al, 0, c)},\ \  \Delta t^h( x, \al, 0, c)=\dfrac{h^2}{Q_h(x, \al, 0, c)},\\
\aad p^h \( (x,\al),  (y, \ell) |0, c\)=0 \ \  \text{for all not listed values of $(y, \ell) \in S_h\times \M$}.
\earray
\eeq
 At impulsive harvesting and renewing steps, we define
\beq{e.4.8}
\barray
& p^h\(  (x, \al) , (x - h, \al) | 1, c\)=1, \ \  \Delta t^h ( x, \al, 1, c) =0,\\
& p^h\( (x, \al), (x + h, \al) | -1, c\)=1, \ \  \Delta t^h ( x, \al, -1, c) =0.
\earray
\eeq
Thus, $p^h \( (x,\al),  (y, \ell) |\pm 1, c\)=0$ for all nonlisted values of $(y, \ell) \in S_h\times \M$.
The definition of
$p^h((x, \al), \cdot | i, c\big)$ for $(x, \al)\notin \SS\times \M$ is not important since in the analysis of the control problem, the chain will be stopped at the first time $\{X^h_n\}$ exits $\SS$.
Using the above transition probabilities,
we can check that
 the local
 consistency
 conditions of $\{(X^h_n, \Lambda^h_n)\}$ in \eqref{e.4.5a} are satisfied.

The convergence result is based on a continuous-time interpolation of the chain and relaxed control representations. In addition, a ``stretched-out'' time scale is introduced to overcome the possible non-tightness of the piecewise constant  interpolations associated with $Y\cd$ and $Z\cd$; see \cite{Jin12} and \cite{KM91}. Using weak convergence methods, we can obtain the convergence of the value function. The main convergence result is given below. The proof is a modification of that in \cite{Jin12}, we omit it for brevity.

\begin{thm}\label{thm:6}  Let $V(x, \al)$ and $V^h(x, \al)$ be value functions defined in \eqref{e.5} and \eqref{e.4.5}, respectively. Then $V^h(x, \al)\to V(x, \al)$ as $h\to 0$.
\end{thm}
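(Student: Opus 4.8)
The plan is to establish the two one-sided inequalities $\liminf_{h\to 0} V^h(x,\al)\ge V(x,\al)$ and $\limsup_{h\to 0} V^h(x,\al)\le V(x,\al)$ separately, following the weak-convergence program of \cite{Jin12, Kushner92}. First I would pass from the discrete chain $\{(X^h_n,\Lambda^h_n)\}$ to continuous-time interpolations: using the interpolation intervals $\Delta t^h_k$ from \eqref{e.4.3}, define piecewise-constant processes $X^h\cd$ and $\Lambda^h\cd$ on the real time scale, together with the interpolated singular controls $Y^h\cd$ and $Z^h\cd$ accumulated from the increments $\Delta Y^h_k = h I_{\{\pi^h_k=1\}}$ and $\Delta Z^h_k = h I_{\{\pi^h_k=-1\}}$, and a relaxed-control representation $m^h\cd$ of the regular controls $C^h_k$ (the relaxed framework is needed since $\mathcal U$ is compact but $p$ need not be concave in $c$). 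The local consistency conditions \eqref{e.4.5a}, which the transition probabilities \eqref{e.4.7}--\eqref{e.4.8} were designed to enforce, guarantee that $X^h\cd$ solves a martingale problem whose drift and diffusion coefficients converge to $b - \int_{\mathcal U} f\,dm$ and $\sg^2$, respectively; the finite-state switching part $\Lambda^h\cd$ converges to $\Lambda\cd$ by the standard argument for a bounded generator.

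For the upper bound, I would fix, for each $h$, an $\e$-optimal control $(\pi^h, C^h)$ so that $J^h(x,\al,\pi^h,C^h)\ge V^h(x,\al)-\e$, and prove that the family $\{(X^h,\Lambda^h,m^h,Y^h,Z^h)\}$ is tight. The relaxed-control and diffusion parts are tight by routine estimates, but the interpolated singular controls $Y^h,Z^h$ need not be tight on the real time scale, since one impulse can be arbitrarily large. To handle this I would introduce the ``stretched-out'' time scale of \cite{KM91, Jin12}, rescaling so that each unit of total variation of $Y^h+Z^h$ occupies a fixed amount of stretched time; on that scale the processes are Lipschitz and hence tight. Applying Prohorov's theorem, I would extract a weakly convergent subsequence and, via Skorohod representation and the martingale-problem characterization, identify the limit $(X,\Lambda,C,Y,Z)$ as a solution of \eqref{e.2} driven by an admissible (relaxed) control $\Psi$. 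The continuity and boundedness of $p$ (from assumption (A)), the discount $\delta>0$, and convergence of the interpolated exit times $t^h_{\eta_h}\to\tau$ then yield $\lim_h J^h = J(x,\al,\Psi)\le V(x,\al)$, whence $\limsup_h V^h \le V + \e$; letting $\e\to 0$ closes this direction.

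For the lower bound, I would reverse the construction: given any admissible $\Psi=(C,Y,Z)\in\mathcal A_{x,\al}$ with $J(x,\al,\Psi)\ge V(x,\al)-\e$, I would approximate it by a sequence of admissible chain controls $(\pi^h,C^h)$, discretizing $C$ and rounding the cumulative singular controls $Y,Z$ to multiples of $h$, and show, again by weak convergence and the continuity of the payoff, that $J^h(x,\al,\pi^h,C^h)\to J(x,\al,\Psi)$. This gives $\liminf_h V^h \ge J(x,\al,\Psi)\ge V(x,\al)-\e$, and letting $\e\to 0$ produces the matching inequality. Combining the two bounds yields $V^h(x,\al)\to V(x,\al)$.

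The main obstacle is the non-tightness of the singular-control interpolations on the natural time scale; the stretched-out time-scale device is precisely what resolves it, and the bulk of the technical work lies in verifying that the limit extracted on the stretched scale, when mapped back to real time, yields a genuine admissible triple $(C,Y,Z)$ for the original problem, with no spurious mass introduced at the jump instants and with the constraints $\Delta Y\,\Delta Z=0$ and $X\cd\in\SS$ preserved in the limit. A secondary but essential point is the convergence of the exit times associated with the state constraint: I would need the limit process to exit $\SS$ regularly, in the sense that it does not dwell on the boundary $\{x=\lambda\}$ for a positive length of (stretched) time without leaving, so that $t^h_{\eta_h}\to\tau$ and the truncated payoffs pass correctly to the limit.
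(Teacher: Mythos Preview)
Your proposal is correct and follows exactly the approach the paper indicates: the paper does not give a detailed proof but states that it proceeds via continuous-time interpolation of the chain, relaxed control representations, the ``stretched-out'' time scale to handle non-tightness of the singular controls, and weak convergence as in \cite{Jin12, KM91}, omitting details for brevity. Your outline of the two one-sided inequalities, the tightness issue for $Y^h, Z^h$, and the identification of the limit as an admissible triple is precisely the program the paper has in mind.
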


\section{Numerical Examples}\label{sec:exm}

We consider a stochastic population with harvesting and renewing given by
\beq{e.exm} dX(t)=b(X(t), \Lambda(t)) dt +  \sigma( X(t), \Lambda(t)) dw(t) - C(t)dt - dY(t) + dZ(t),\eeq
where
\begin{equation*}\barray
& b(x, \al) = x(\al-1.5x), \ \  \sg(x, \al) = (\al/2)x.
\earray
\end{equation*}
Suppose that $\M=\{1, 2\}$ and  the generator $\Gamma=(\Gamma_{\al \ell})_{2\times 2}$ of the Markov chain $\Lambda\cd$ is given by $$\Gamma_{11}=-1, \ \  \Gamma_{12}=1, \ \  \Gamma_{21}=1.5, \ \  \Gamma_{22}=-1.5.$$
We also assume that
\begin{equation*}\barray
& q = 1, \ \   r = 2.25, \ \  \delta =0.05,\\
&  p(x, \al, c) = \dfrac{3c}{2}-\dfrac{\al c^2}{8(1+x)}   \ \  \text{for} \ \ (x, \al, c)\in \R_+\times \M\times \mathcal{U}, \\
& f(x, c) = c \ \  \text{for} \ \ (x,c)\in  \R_+\times \mathcal{U}.\\
\earray
\end{equation*}
The original price and costs are
$a_1=1.5, a_2=0.5, a_3 =0.75$ and the regular control cost function is $g(x, \al, c) =\frac{\al c^2}{8(1+x)}$.
Thus, for each $x\in \R_+$ and $\al\in \M$, $g(x, \al, c)$ has a quadratic form.
For a fixed regular control $c$ and regime $\al\in \M$, the cost function is decreasing with respect to the population size. This is motivated by an observation in harvesting problems that when the species is rare, it is more difficult
to harvest 
leading
to the higher harvesting cost; see \cite{A2000}.
By the state constraint, the time horizon of the control problem is $[0, \tau]$, where $\tau=\inf\{t\ge 0: X(t)<\lambda\}$. The value of $\lambda$ and the control set $\mathcal{U}$ are to be determined.
Using the transition probabilities constructed in \eqref{e.4.7} and \eqref{e.4.8}, we carry out the computation by using value iteration and policy iteration method.  For each $(x, \al)\in S_h\times \M$, we use the following notations for the $n$th iteration: $C^{h, n}(x, \al)$ is the regular control, $\pi^{h, n}(x, \al)$ is the control type, and $V^{h, n}(x, \al)$ is the value function. Initially, we take $$C^{h, 0}(x, \al)= 0, \ \   \pi^{h, 0}(x, \al)=1,  \ \  V^{h, 0}(x, \al) =q(x-\lambda) \ \  \text{for} \ \ (x,\al)\in S_h\times \M.$$
Note that $\big(\pi^{h, 0}(x, \al), C^{h, 0}(x, \al)\big)$
corresponds to the control $\Psi=(C, Y, Z)\in \mathcal{A}_{x, \al}$ given by
	 $$C(t)=0, \ \  Y(t)=x-\lambda, \ \  Z(t)=0 \ \  \text{for} \ \ t\ge 0,$$
of the controlled switching diffusion \eqref{e.2}.
Based on our computation of the $n$th iteration, we define
\begin{equation*}
\barray
\aad V^{h, n+1}_{1, c}(x, \al) = \big(  V^{h, n}(x-h, \al) + qh \big) I_{\{x>\lambda\}},\\
\aad V^{h, n+1}_{-1, c}(x, \al) =  \big( V^{h, n}(x+h, \al) - rh \big) I_{ \{x<U\}},\\
\aad V^{h, n+1}_{0, c}(x, \al) = e^{-\delta \Delta t^h(x, 0, c, \al)}
\big[ V^{h, n}(x +h, \al)p^h\( (x, \al) , (x+h, \al) | 0, c\)
\\
\aad \qquad+ V^{h, n}(x -h, \al)p^h\( (x, \al) , (x-h, \al) | 0, c\)
\\
\aad \qquad + V^{h, n}(x, \al)p^h\( (x, \al) , (x, \al) | 0, c\)
\big] I_{ \{x<U\}} + p(x, \al, c) \Delta t^h(x, \al, i, c) I_{ \{x<U\}}.
\earray
\end{equation*}
We find an improved value $V^{h, n+1}(x, \al)$
and record the corresponding controls by
\begin{equation*}
\barray
\aad \(\pi^{h, n+1}(x, \al), C^{h, n+1}(x, \al)\)=\argmax \left\{V^{h, n+1}_{i, c}(x, \al): i\in \{-1, 0, 1\}, c\in \mathcal{U}\right\} ,\\
\aad V^{h, n+1}(x, \al)= \max \left\{V^{h, n+1}_{i, c}(x, \al): i\in \{-1, 0, 1\}, c\in \mathcal{U}\right\}.
\earray
\end{equation*}
The iterations stop as soon as the
increment
$V^{h, n+1}(x, \al)-V^{h, n}(x, \al)$
reaches a pre-specified
tolerance level. We set the error tolerance to be $10^{-6}$.

 \begin{figure}[h!tb]
 	\centering		{{\includegraphics[width=5cm]{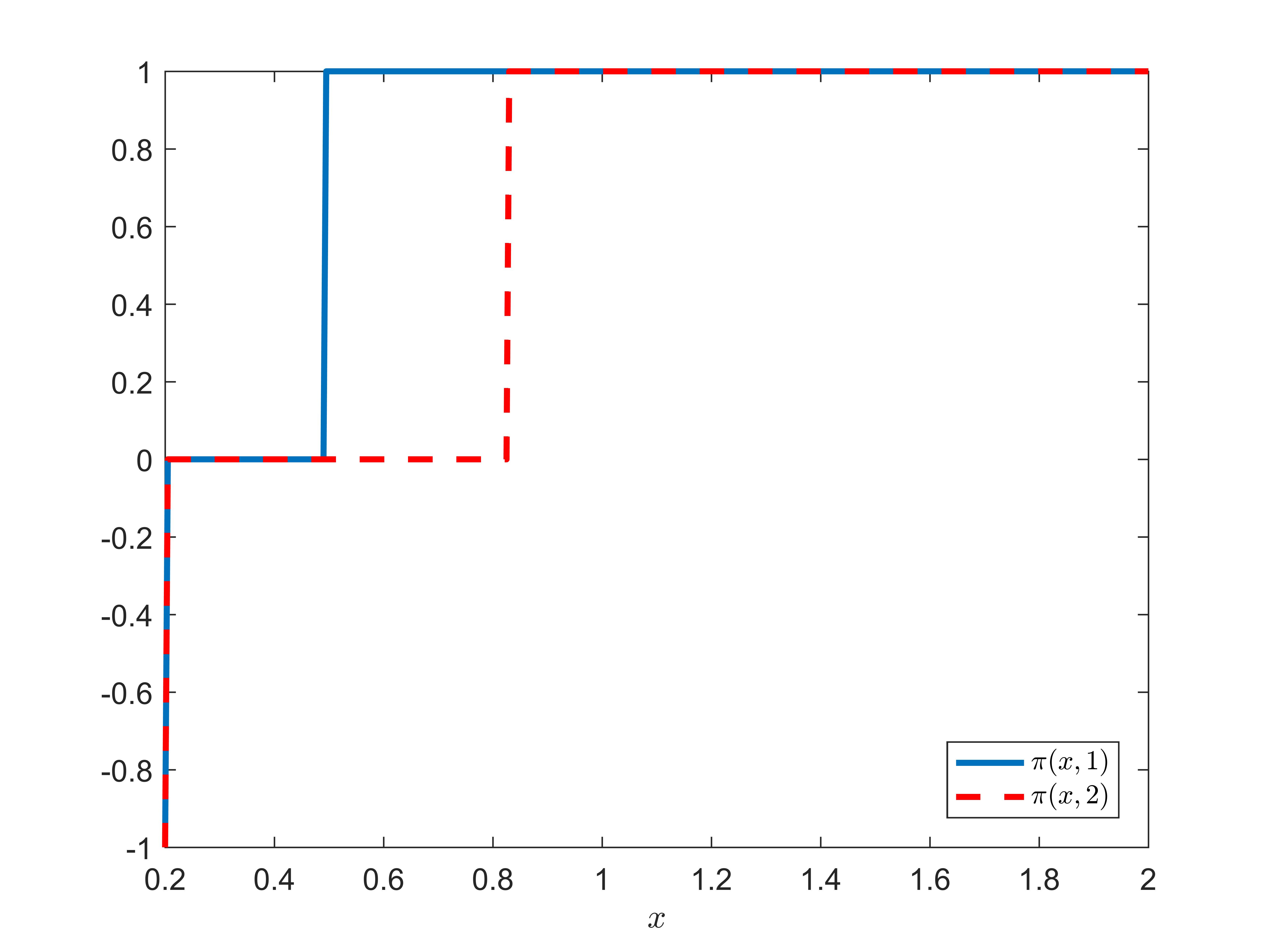} }}%
			\quad
\subfloat{{\includegraphics[width=5cm]{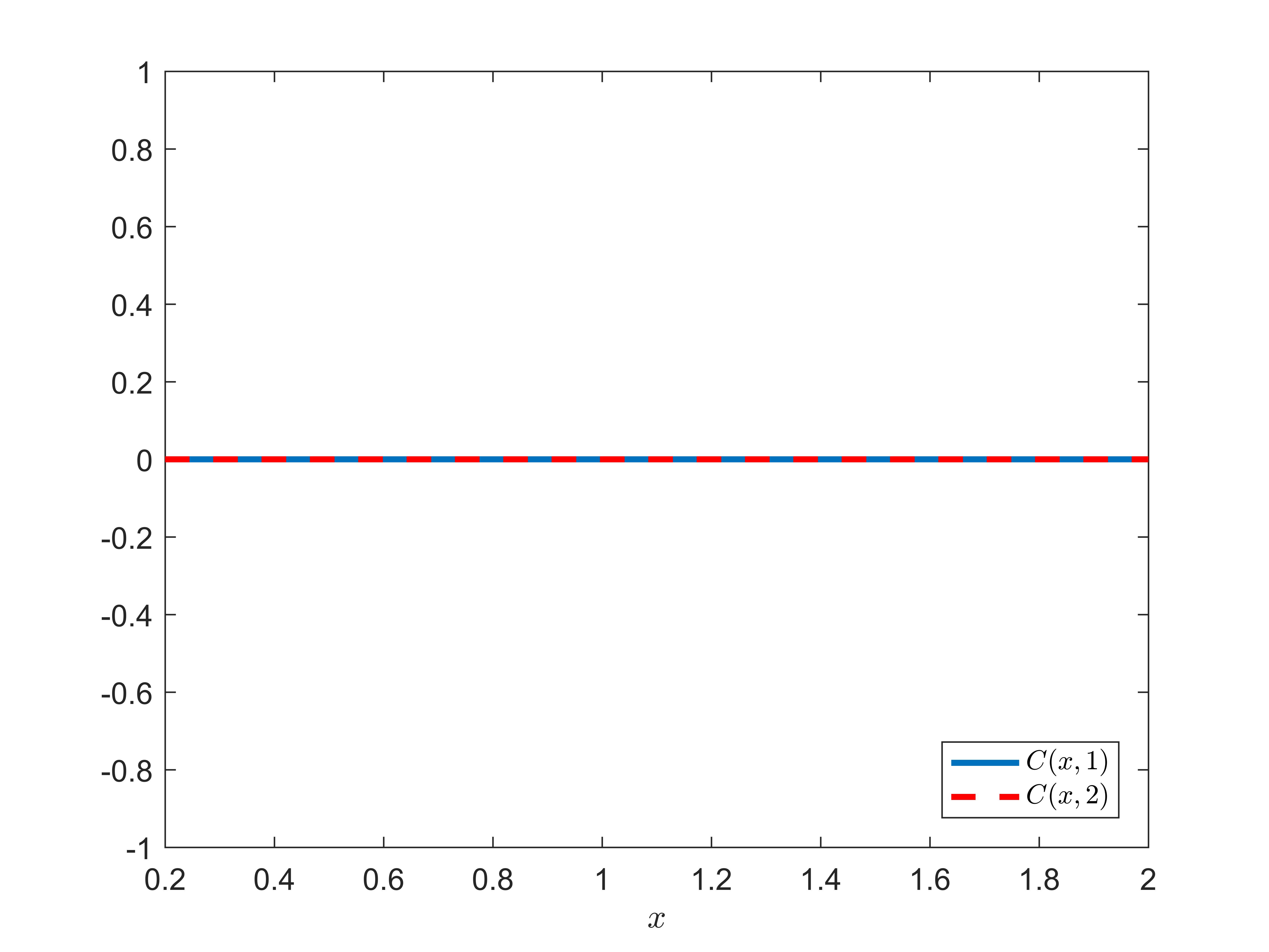} }}%
	\caption{The control type (left) and regular control (right) as functions of  $(x, \al)$ for $\lambda=0.2$ and $\mathcal{U}=\{0\}$  (Example \ref{exm:1})}
	\label{fig1}
\end{figure}

 \begin{figure}[h!tb]
	\begin{center}
		\includegraphics[height=2.5in,width=3.5in]{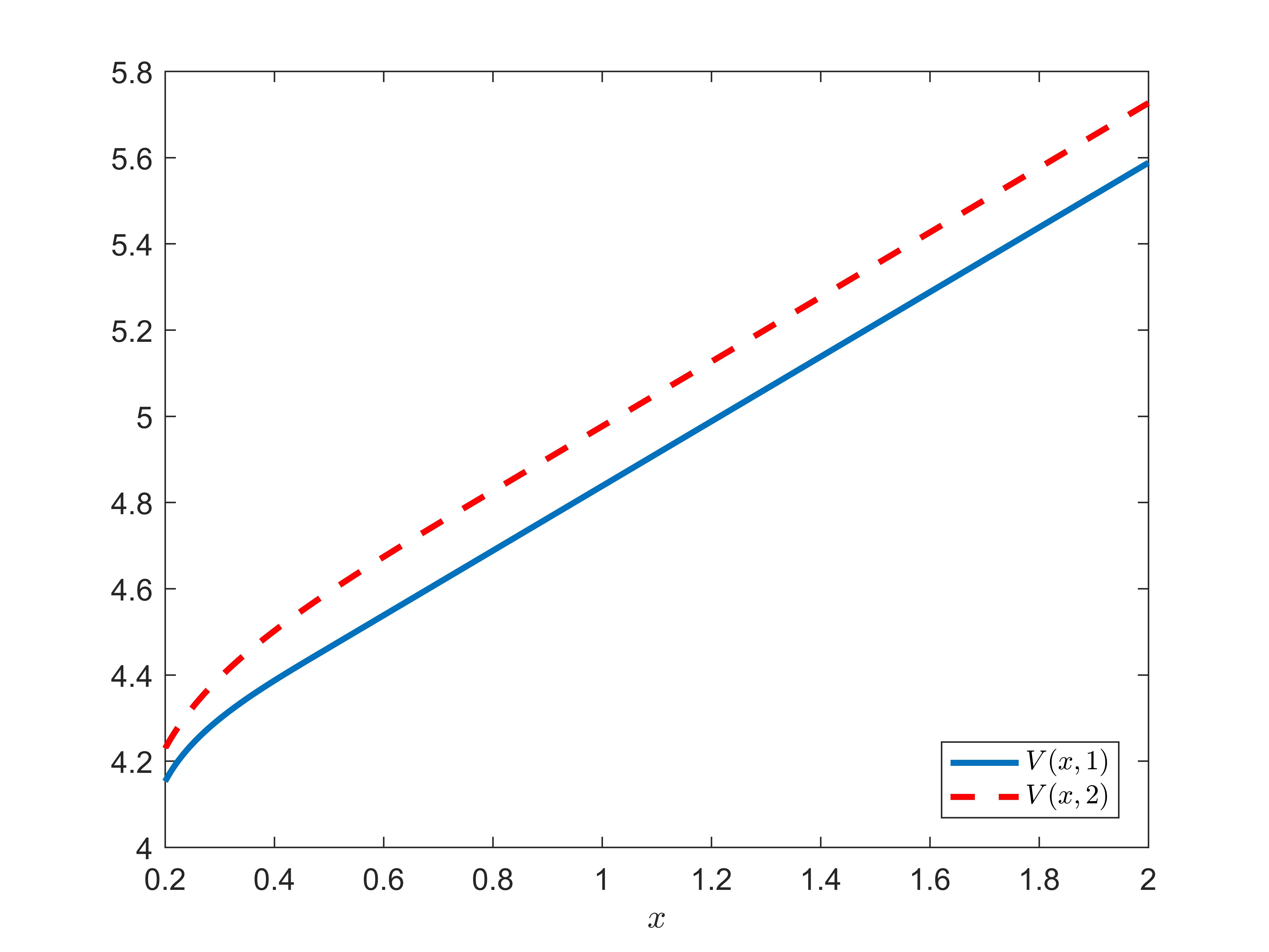}
		\caption{The value function as a function of $(x, \al)$ for $\lambda=0.2$ and $\mathcal{U}=\{0\}$ (Example \ref{exm:1})}  \label{fig2}
		\end{center}
\end{figure}

 \begin{exm} \label{exm:1}
 {\rm Let $\lambda=0.2$ and $\mathcal{U}=\{0\}$. In view of
Theorem \ref{thm:3}, we can take $U=2$.
  Figure \ref{fig1} shows the control type, regular control rate as functions of population size $x$ and  regime $\al$. The corresponding value function is given in Figure \ref{fig2}.
  Since $\mathcal{U}=\{0\}$, the regular control does not work.
   The controller can harvest or renew by exercising impulsive controls only. It appears that in both regimes, if $x=\lambda=0.2$, an impulsive renewing is performed. There is a level  $U_\al$ depending on $\al$ so that we should apply an impulsive harvesting whenever the population size is above $U_\al$ and should not  harvest nor renew whenever the population size is in $(\lambda, U_\al)$. Thus, under this strategy, the population size is in $[\lambda, U_\al]$ for $t\in (0, \infty)$.
  }
 \end{exm}

\begin{figure}[h!tb]
 	\centering		\subfloat{{\includegraphics[width=5cm]{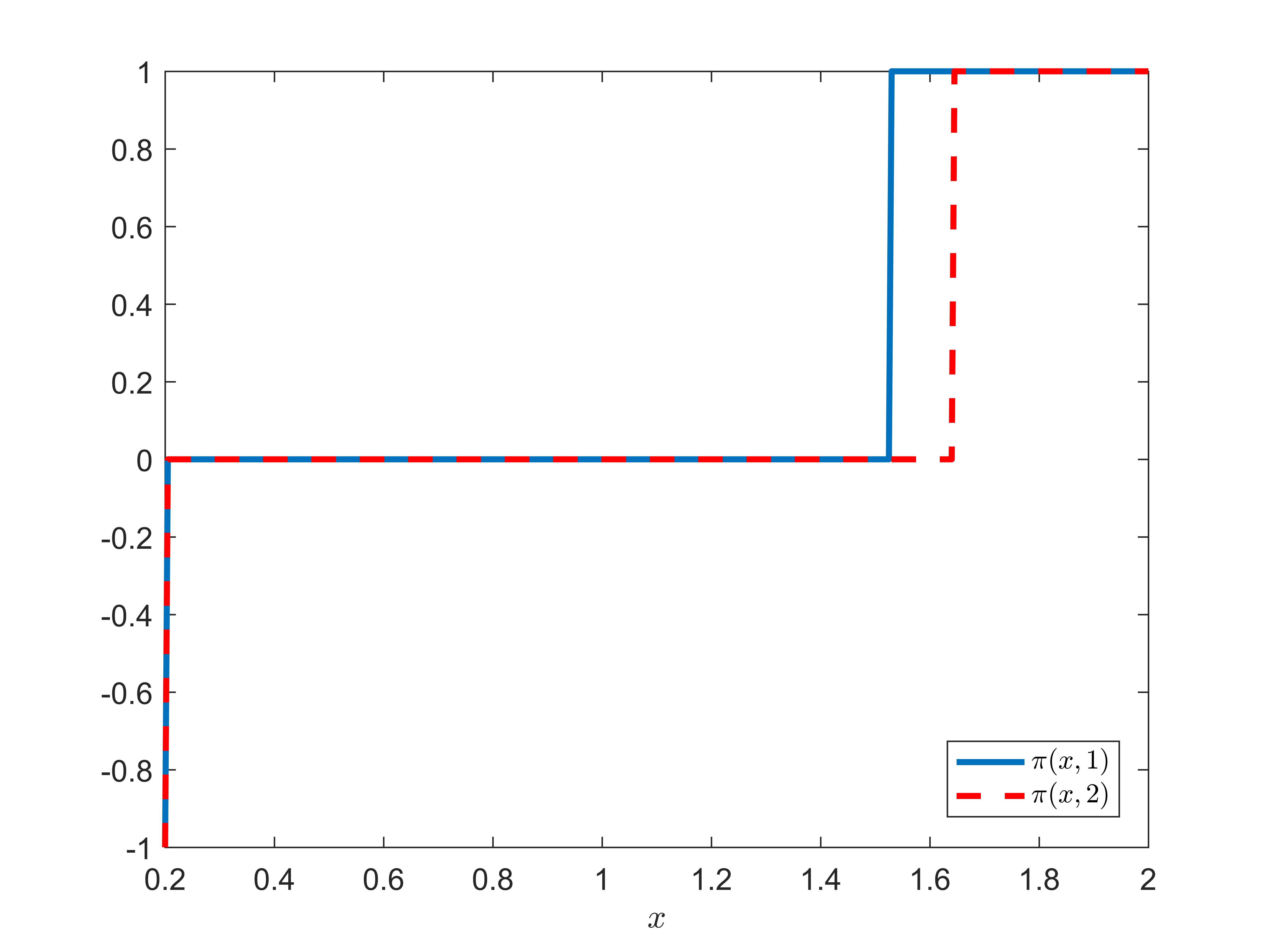} }}%
			\quad
\subfloat{{\includegraphics[width=5cm]{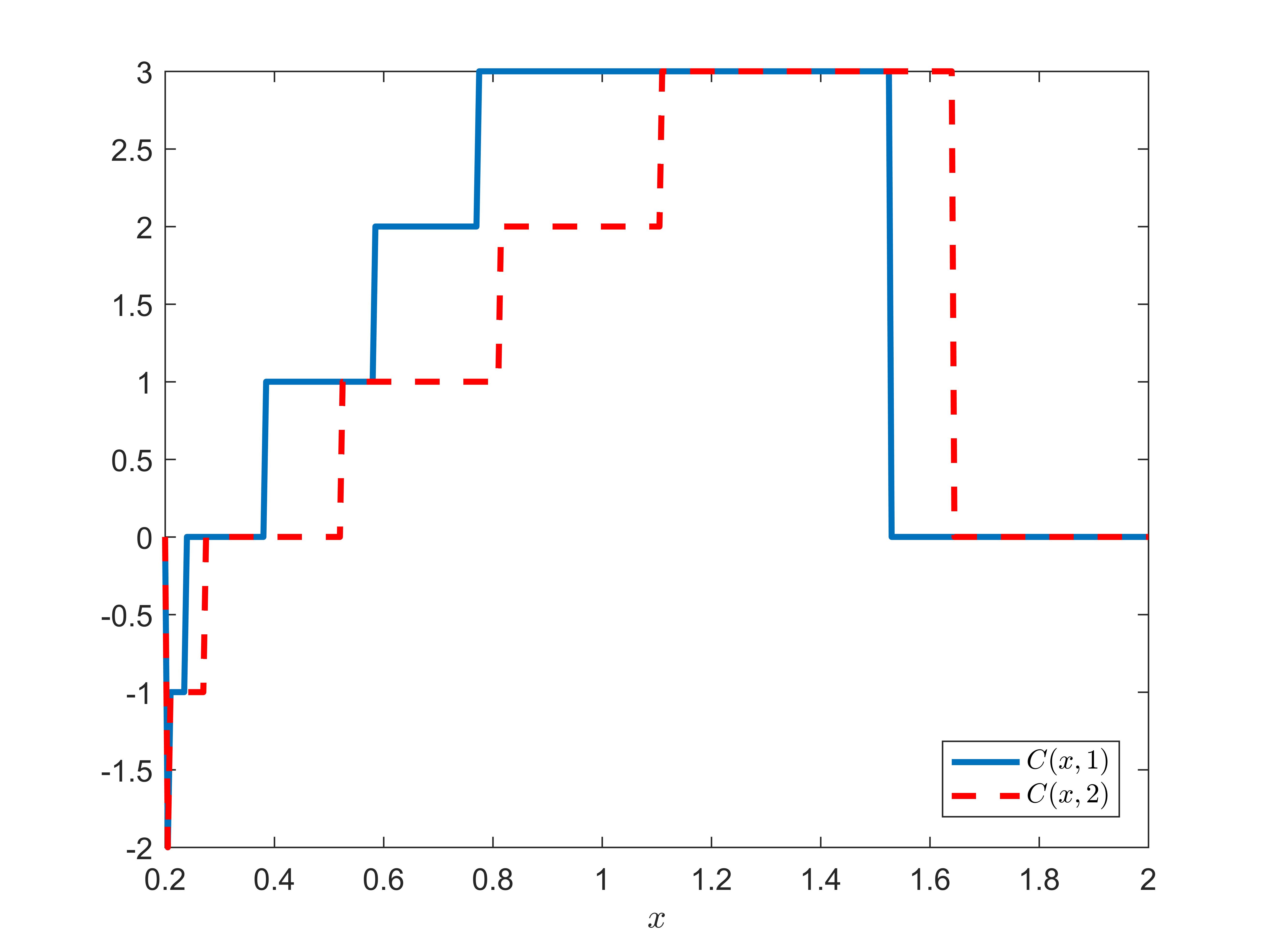} }}%
	\caption{The control type (left) and regular control (right) as functions of $(x, \al)$ for $\lambda =0.2$ and $\mathcal{U}=\{k\in \mathbb{Z}: -2\le k\le 3\}$ (Example \ref{exm:2})}
	\label{fig3}
\end{figure}
	
  \begin{figure}[h!tb]
	\begin{center}
		\includegraphics[height=2.5in,width=3.5in]{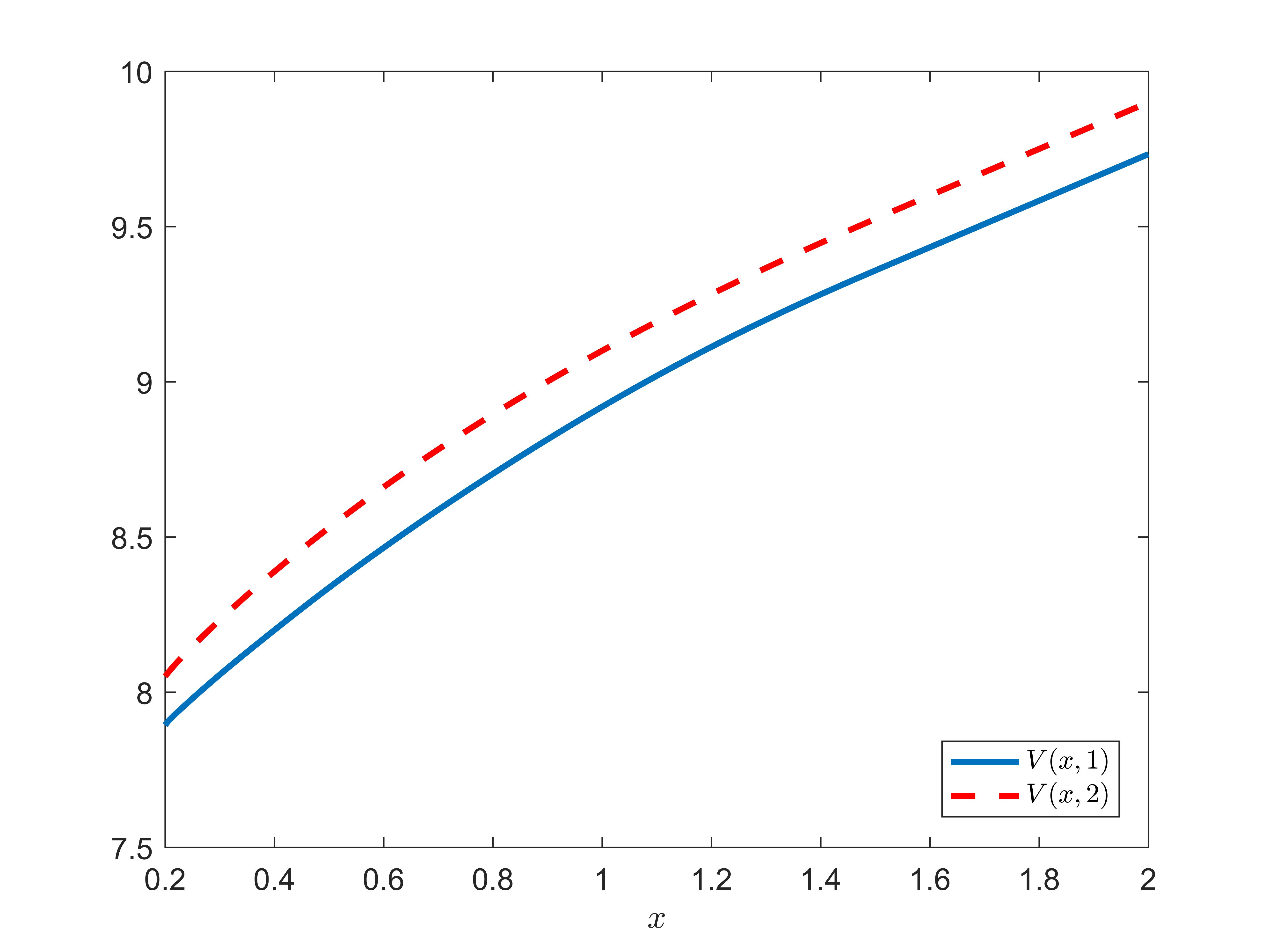}
		\caption{The value function as a function of $(x, \al)$ for $\lambda =0.2$ and $\mathcal{U}=\{k\in \mathbb{Z}: -2\le k\le 3\}$ (Example \ref{exm:2})}  \label{fig4}
		\end{center}
\end{figure}

\begin{exm} \label{exm:2}
{\rm
Let $\lambda =0.2$ and $\mathcal{U}=\{k\in \mathbb{Z}: -2\le k\le 3\}$.  In view of
Theorem \ref{thm:3}, we can take $U=2$.
  Figure \ref{fig3} shows the control type, regular control rate as functions of population size $x$ and regime $\al$. The corresponding value function is given in Figure \ref{fig4}. Compared to the preceding example, the controller have more options to
 harvest or renew. Therefore, the value function in Figure \ref{fig4} is much larger than the preceding one. Moreover, as shown in Figure \ref{fig3}, the larger the population is, the higher the harvesting rate is.
  }
  \end{exm}

  \begin{figure}[h!tb]
 	\centering		\subfloat{{\includegraphics[width=5cm]{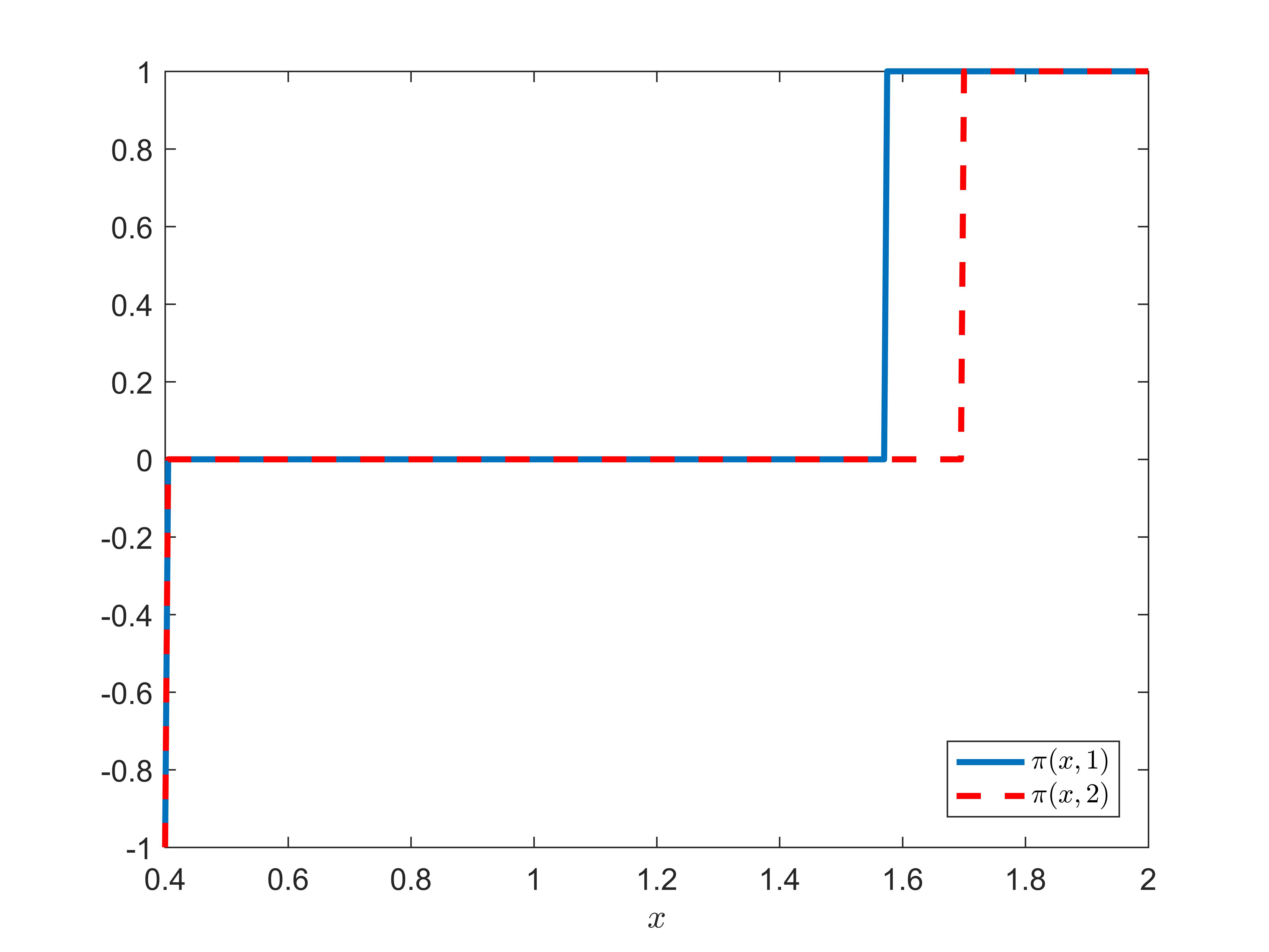} }}%
			\quad
\subfloat{{\includegraphics[width=5cm]{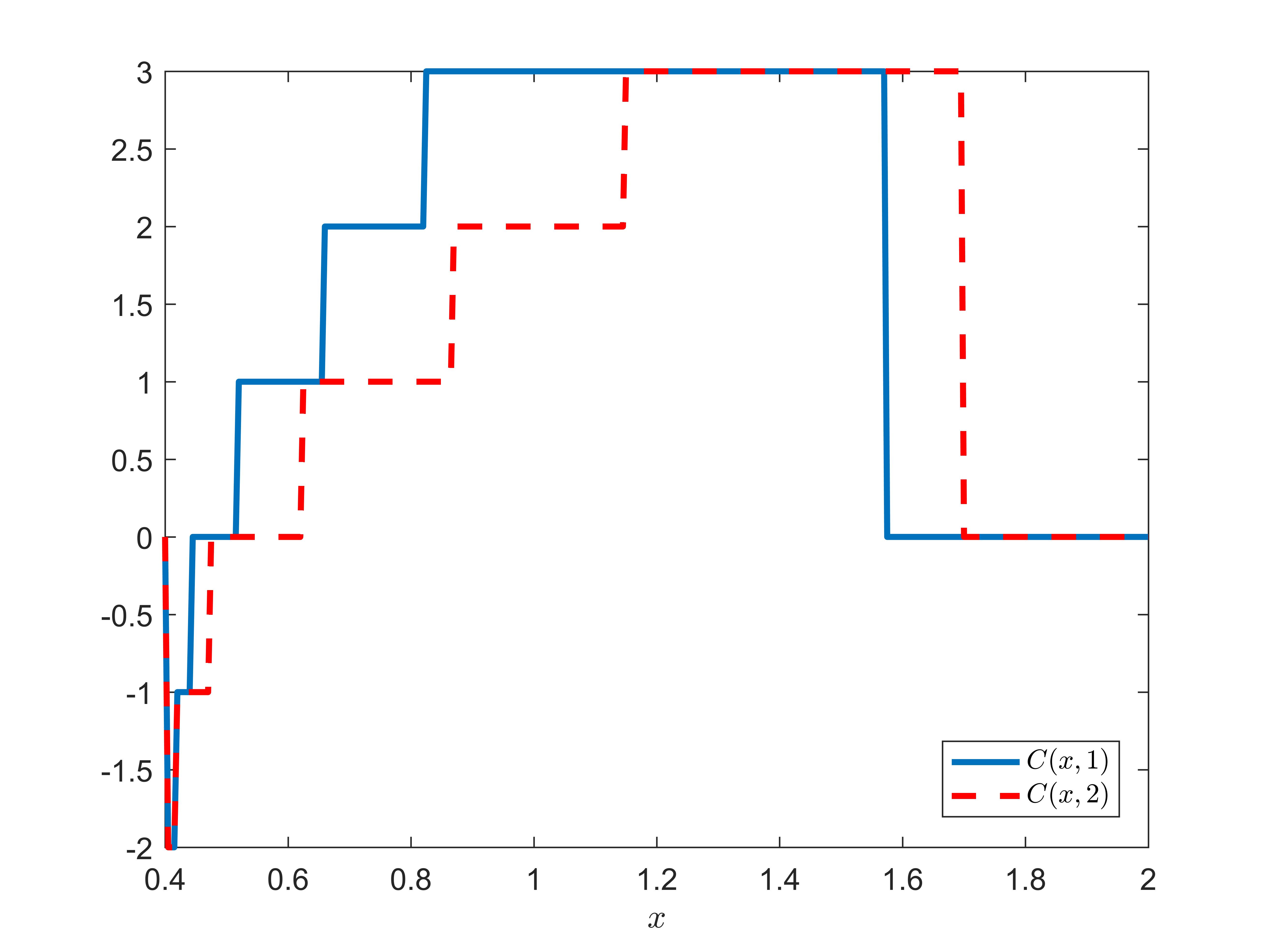} }}%
	\caption{The control type (left) and regular control (right) as functions of $(x, \al)$ for $\lambda =0.2$ and $\mathcal{U}=\{k\in \mathbb{Z}: -2\le k\le 3\}$ (Example \ref{exm:3})}
	\label{fig5}
\end{figure}
	
  \begin{figure}[h!tb]
	\begin{center}
		\includegraphics[height=2.5in,width=3.5in]{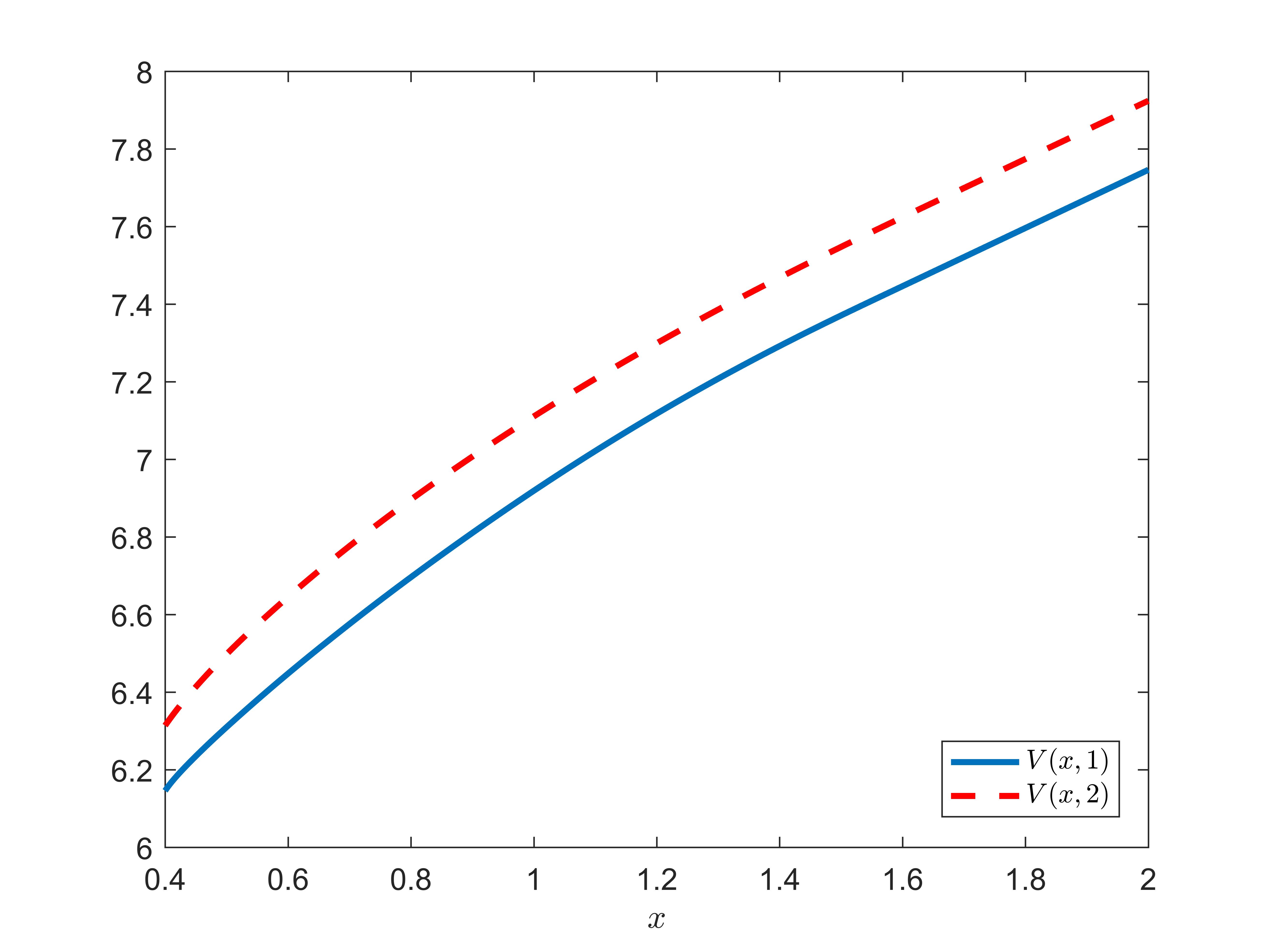}
		\caption{The value function as a function of  $(x, \al)$ for $\lambda =0.2$ and $\mathcal{U}=\{k\in \mathbb{Z}: -2\le k\le 3\}$ (Example \ref{exm:3})}  \label{fig6}
		\end{center}
\end{figure}

\begin{exm} \label{exm:3}
{\rm
Let $\lambda =0.4$ and $\mathcal{U}=\{k\in \mathbb{Z}: -2\le k\le 3\}$. In view of
Theorem \ref{thm:3}, we can take $U=2$.
  Figure \ref{fig5} shows the control type, regular control rate as functions of population size $x$ and regime $\al$. The corresponding value function is given in Figure \ref{fig6}.
  It can be seen that the value function is smaller than that in Example \ref{exm:2}. This observation fits the fact that the one can harvest only if the population size is higher than $\lambda =0.4$ compared to 0.2 in Example \ref{exm:2}.  In this and the previous numerical experiments, we observe the same phenomena as follows: (a) if the population size hits $\lambda$, an impulsive renewing is performed to keep the species in $\SS=[\lambda, \infty)$, which is the harvesting-renewing domain; (b) For each $\al$, there are levels $L^{(1)}_\al, L^{(2)}_\al$,  and $L^{(3)}_\al$ so that we should renew with bounded rates if $x\in(\lambda, L^{(1)}_\al)$, we should not harvest nor renew if $x\in[L^{(1)}_\al, L^{(2)}_\al)$,   we should harvest with bounded rates if $x\in[L^{(2)}_\al, L^{(3)}_\al)$, and we should perform an impulsive harvesting if $x\in[L^{(3)}_\al, U]$.

  It can be seen from the numerical experiments that we should keep the population size in the interval $[\lambda, U]$ for $t>0$. In other words, if the initial population size  $x>\lambda$, then $\tau=\infty$ almost surely. Thus, compared with the well-known formulations for harvesting-type problems,
 the proposed model offers an effective planning for the balance between economical aspect and the
 sustainable purpose.
  }
  \end{exm}

 \begin{exm} \label{exm:4}
{\rm
 We
consider Eq. \eqref{e.exm} with
 \begin{equation*}\barray
& b(x, \al) = x(\al-1.5x), \ \  \sg(x, \al) = \mu_1 x + \mu_2 \ \ \text{for} \ \ (x, \al)\in \R_+\times \M,
\earray
\end{equation*}
where the constants $\mu_1$ and $\mu_2$ are 
to be determined. 
The regular cost function is $g(x, \al, c) = \frac{c^2}{10}$, the control set is $\mathcal U = \{k/2: -4\le k\le 4, k\in \mathbb{Z}\}$, and we keep the other data as in the preceding examples. To explore how noise impacts the problem, we first choose $(\mu_1, \mu_2)=(1,0)$, and record the results in   Figure \ref{fig7}. The results for large white noise intensities
when $(\mu_1, \mu_2)=(30, 0)$ and $(\mu_1, \mu_2)=(0, 30)$ are presented in Figures \ref{fig8} and  \ref{fig9}, respectively. It turns out, as stated 
in 
Theorems \ref{thm:4} and \ref{thm:5}, when the white noise intensity is very large, the value function $V (x, \al)$ is close to $q(x-\lambda)$ for $(x, \al)\in [\lambda, \infty)\times \M$ (see Figures \ref{fig8} and  \ref{fig9}). Also, as shown in Figures \ref{fig8} and  \ref{fig9}, under a large white noise intensity, one should always harvest (either with the maximal bounded harvesting rate  $C(x, \al) =2$ or with an impulsive harvesting) and never renew.  Meanwhile, as shown in  Figure \ref{fig7}, when the noise intensity is not large, the regular control takes a variety of values in the control set $\mathcal{U}$ and the value function is much higher than those in  Figures \ref{fig8} and  \ref{fig9}.
We refer to \cite{A1998,H2019,H2020} for more insight and numerical experiments
regarding how noise can impact harvesting and renewing actions.

  \begin{figure}[h!tb]
	\begin{center}
		\includegraphics[height=2.5in,width=5in]{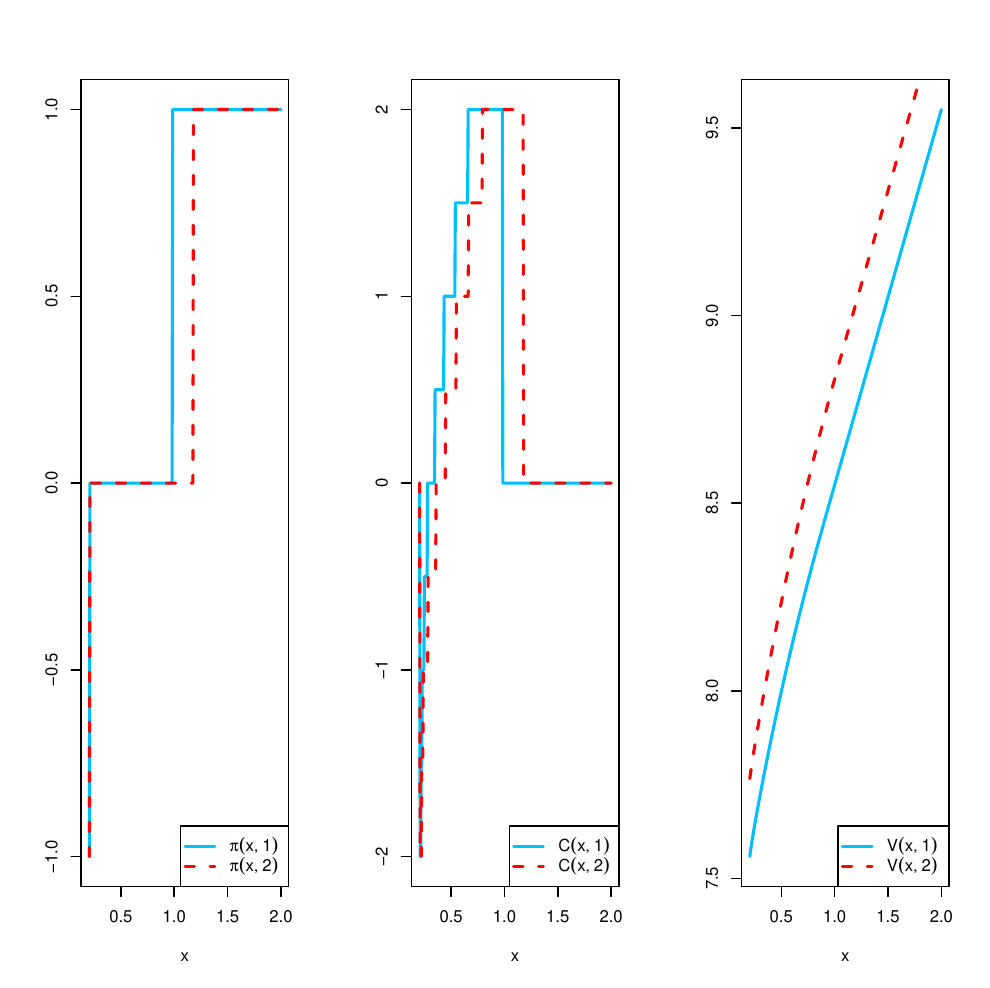}
		\caption{The control type (left), regular control (middle), and  the value function (right) as functions of  $(x, \al)$ for $\lambda =0.2$, $\mathcal{U}=\{k/2: -4\le k\le 4, k\in \mathbb{Z}\}$, $(\mu_1, \mu_2)=(1, 0)$ (Example \ref{exm:4})}  \label{fig7}
		\end{center}
\end{figure}

 \begin{figure}[h!tb]
	\begin{center}
		\includegraphics[height=2.5in,width=5in]{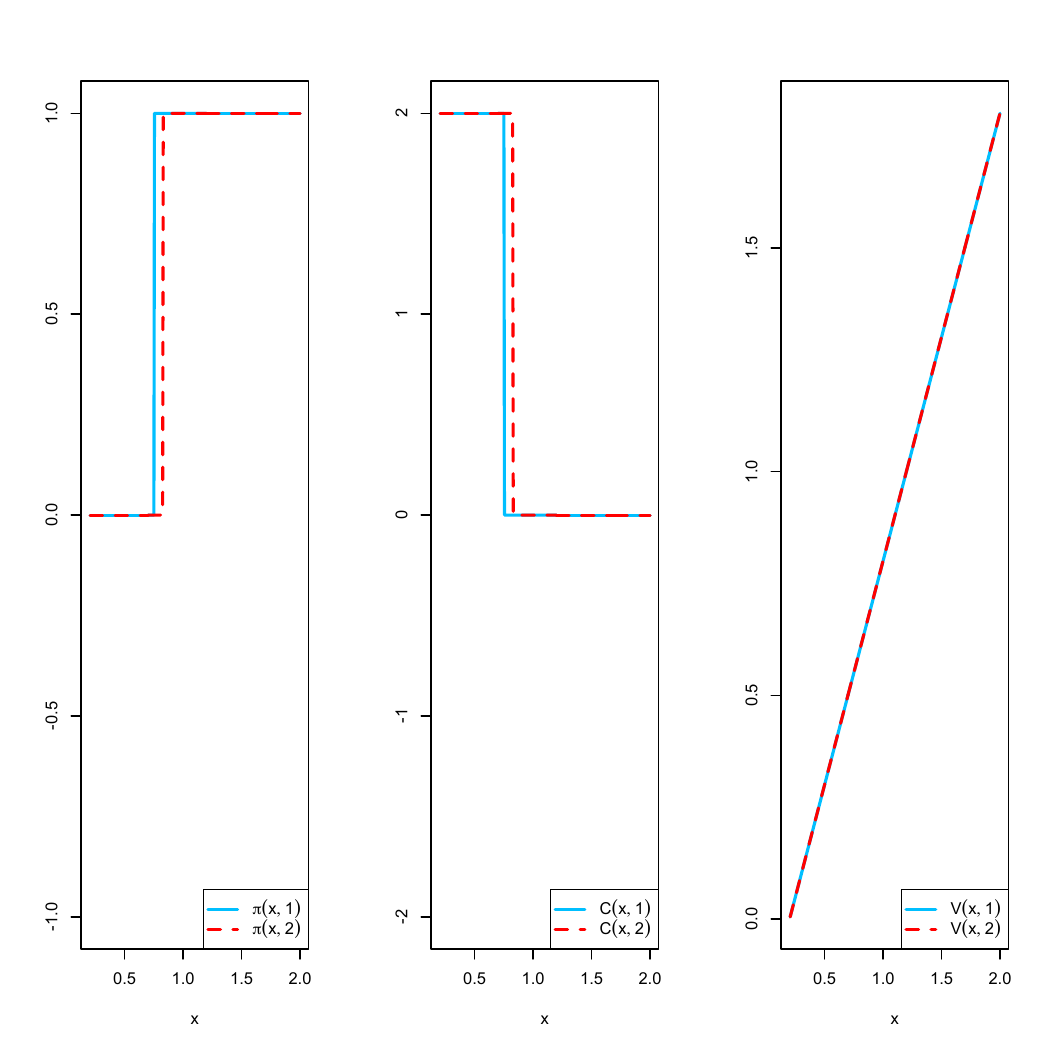}
		\caption{The control type (left), regular control (middle), and  the value function (right) as functions of  $(x, \al)$ for $\lambda =0.2$, $\mathcal{U}=\{k/2: -4\le k\le 4, k\in \mathbb{Z}\}$, $(\mu_1, \mu_2)=(30, 0)$ (Example \ref{exm:4})}  \label{fig8}
		\end{center}
\end{figure}

 \begin{figure}[h!tb]
	\begin{center}
		\includegraphics[height=2.5in,width=5in]{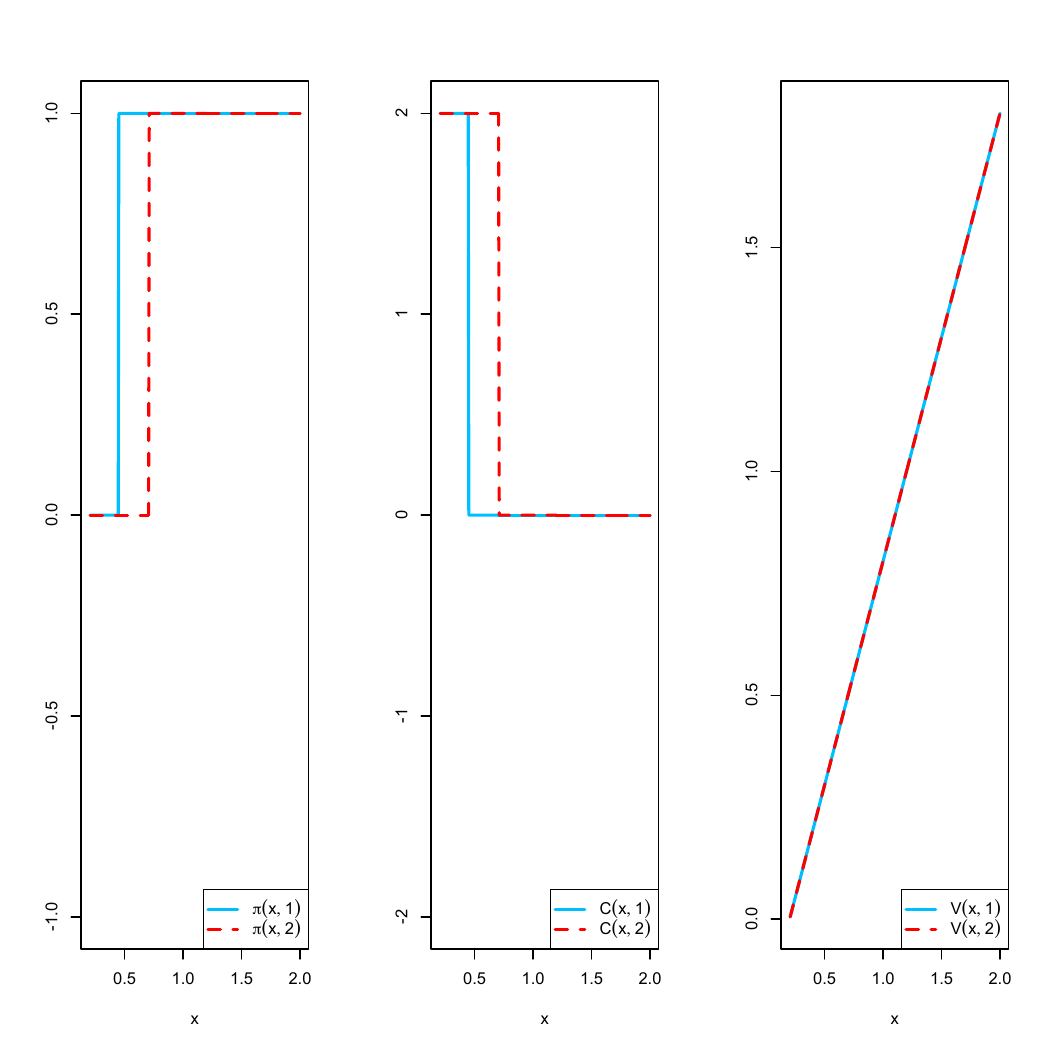}
		\caption{The control type (left), regular control (middle), and  the value function (right) as functions of  $(x, \al)$ for $\lambda =0.2$, $\mathcal{U}=\{k/2: -4\le k\le 4, k\in \mathbb{Z}\}$, $(\mu_1, \mu_2)=(0, 30)$ (Example \ref{exm:4})}  \label{fig9}
		\end{center}
\end{figure}

  }
   \end{exm}

\section{Further Remarks}

This paper has been devoted to the study of a generalized harvesting problem for  a stochastic population. We have established the finiteness and continuity of the value function. Moreover, we have shown that for common population systems, it is important to maintain the population size  in a bounded set. We have also studied the impact of large white noise on harvesting.
A numerical algorithm has been constructed by the Markov chain approximation methods. The numerical experiments have revealed the effect of Markovian switching and control costs associated with harvesting/renewing activities. It has been observed that under a mixed singular-control formulation, the decision makers have more options to harvest or renew the species, which is much more beneficial than the known models with no renewing, or with only one control component. Moreover, the state constraint provides an effective strategy for sustainable purpose.

In this paper, we focus on single species. The development in this paper can be carried over to the multidimensional case. 
As for single species models (Theorem \ref{thm:3}), it appears that the controller should keep the population sizes in a bounded set. However,
in order to identify such a bounded set,
one needs to be careful to handle the interaction between species.
Although multidimensional systems can be handled,
the
multidimensional structure added more difficulty to study the impact of large white noise. The computation of multidimensional ecosystems with many constraints and controls is another challenging task.
In addition, one can also consider the problem under other constraints, random prices, and random cost functions.


\end{document}